\newenvironment{proof*}{\noindent\emph{Proof}}{$\square$\smallskip}
\newtheorem{theorem}{Theorem}[section]
\newtheorem{Definition}[theorem]{Definition}
\newtheorem{lemma}[theorem]{Lemma}
\newtheorem{Example}[theorem]{Example}
\newtheorem{Remark}[theorem]{Remark}
\newtheorem{proposition}[theorem]{Proposition}
\newtheorem{Exercise}[theorem]{Exercise}
\newtheorem{Exercises}[theorem]{Exercises}
\newtheorem{Notation}[theorem]{Notation}
\newtheorem{Convention}[theorem]{Convention}
\newenvironment{definition}{\begin{Definition}\normalfont}{\end{Definition}}
\newenvironment{example}{\begin{Example}\normalfont}{\end{Example}}
\newenvironment{remark}{\begin{Remark}\normalfont}{\end{Remark}}
\newenvironment{convention}{\begin{Convention}\normalfont}{\end{Convention}}
\newcommand{\id}{\ensuremath{\mathrm{id}}} 
\newcommand{\co}{\ensuremath{\colon}} 
\newcommand{\sm}{\ensuremath{{\rm Sim}}} 
\title[Braided Diagram Groups]{Braided diagram groups and local similarity groups}  
\author[D.~S.~Farley]{Daniel S. Farley}
\address{Department of Mathematics and Statistics\\ Miami University\\ Oxford, OH 45056 U.S.A.}
\email{farleyds@muohio.edu}
\author[B.~Hughes]{Bruce Hughes}
\address{Department of Mathematics\\ Vanderbilt University\\ Nashville, TN 37240 U.S.A.}
\email{bruce.hughes@vanderbilt.edu}
\thanks{The second-named author was supported in part by NSF Grant DMS--0504176.}
\date{\today}
\begin{document}

\begin{abstract} 
Hughes defined a class of groups that act as local similarities on compact ultrametric spaces. Guba and Sapir had previously defined braided diagram groups over semigroup presentations. The two classes of groups share some common characteristics: both act properly by isometries on CAT(0) cubical complexes, and certain 
groups in both classes have type $F_{\infty}$, for instance.

Here we clarify the relationship between these families of groups: the braided diagram groups over tree-like semigroup presentations are precisely 
the groups that act on compact ultrametric spaces via  small similarity structures. The proof can be considered a generalization of the proof that Thompson's group $V$ 
is a braided diagram group over a tree-like semigroup presentation. 

We also prove that certain additional groups, such as the Houghton groups
$H_{n}$, and $QAut(T_{2,c})$, lie in both classes. 
\end{abstract}

\maketitle

\setcounter{tocdepth}{1} \tableofcontents

{\footnotesize {\hspace*{15pt}}2000 Math.\ Subject Class. Primary 20F65, 54E45 {\hspace*{30pt}} }
\tableofcontents

\section{Introduction}

In \cite{HugLSHP}, Hughes described a class of groups that act as homeomorphisms on compact ultrametric spaces. Fix a compact ultrametric space $X$.
The essence of the idea
was to associate to $X$ a \emph{finite similarity structure}, which is a function that associates to each ordered pair of balls $B_{1}, B_{2} \subseteq X$ a finite set $\sm_{X}(B_{1}, B_{2})$ of surjective similarities from $B_{1}$ to $B_{2}$. (A \emph{similarity} is a map that stretches or contracts distances by a fixed constant.) The finite
sets $\sm_{X}(B_{1}, B_{2})$ are assumed to have certain desirable closure properties (such as closure under composition). A homeomorphism $h: X \rightarrow X$ is said to be \emph{locally determined by $\sm_{X}$} if each $x \in X$ has a ball neighborhood $B$ with the property that $h(B)$
is a ball and 
the restriction of $h$ to $B$ agrees with one of the local similarities $\sigma \in \sm_{X}(B, h(B))$. The collection of all homeomorphisms that are locally determined by
$\sm_{X}$ forms a group under composition. We will call such a group an \emph{FSS group} (finite similarity structure group) for short. Hughes \cite{HugLSHP} proved
that each FSS group has the Haagerup property, and even acts properly on a CAT(0) cubical complex. In \cite{FH}, the authors described a class of FSS groups that
have type $F_{\infty}$. That class includes Thompson's group $V$, and the main theorem of \cite{FH} is best understood as a generalization of \cite{Brown1987},
where Brown originally showed that $V$ has type $F_{\infty}$.

In earlier work, Guba and Sapir \cite{GubaSapir} had sketched a theory of braided diagram groups over semigroup presentations, and proved that Thompson's group $V$
is a braided diagram group over the semigroup presentation $\langle x\mid x= x^{2} \rangle$. Farley \cite{Far3} showed that braided diagram groups over semigroup presentations
act properly on CAT(0) cubical complexes.

The class $\mathcal{F}$ of FSS groups and the class $\mathcal{B}$ of braided diagram groups therefore have a common origin, as generalizations of Thompson's group $V$. Both classes also share other features in common (as noted above). It is therefore natural to wonder to what extent the two classes are the same. The main goal of this note is to prove Theorem \ref{thm: bigisomorphism}, which says that the FSS groups determined by small similarity structures (Definition \ref{def: smallsim})
are precisely the same as the braided diagram groups determined by tree-like semigroup presentations (Definition \ref{def: treelike}). It is even possible that Theorem \ref{thm: bigisomorphism} describes the precise
extent of the overlap between $\mathcal{F}$ and $\mathcal{B}$, but we do not know how to prove this.

We include all relevant definitions, and our treatment is fairly self-contained as a result. A precise definition of braided diagram groups is given in 
Section \ref{sec:diagramgroups}, the precise definition of FSS groups appears in Section \ref{section:fss}, and the main theorem is proved in Section
\ref{section:main}. Along the way, we give additional examples in the class $\mathcal{F} \cap \mathcal{B}$, including the Houghton groups $H_{n}$ and
a certain group $QAut(T_{2,c})$ of quasi-automorphisms of the infinite binary tree. (These are Examples \ref{example:houghton} and \ref{example:Q}, respectively.)

This note has been adapted from the longer preprint \cite{FH}. The first part of the latter preprint (including roughly the first six sections) will be published elsewhere. The first author would like to thank the organizers of the Durham Symposium (August 2013) for the opportunity to speak. Example \ref{example:houghton} first appeared as part of the 
first author's lecture. The idea of Example \ref{example:Q} occurred to the first author after listening to Collin Bleak's lecture at the Symposium.

\section{Braided diagram groups} \label{sec:diagramgroups}

In this section, we will recall the definition of braided diagram groups over semigroup presentations. 
Note that the theory of braided diagram groups was first sketched by Guba and Sapir \cite{GubaSapir}. A more extended introduction to braided diagram groups
appears in \cite{Far3}.

\begin{definition}
Let $\Sigma$ be a set, called an \emph{alphabet}. The \emph{free semigroup on $\Sigma$}, denoted $\Sigma^{+}$, is
the collection of all positive non-empty strings formed from $\Sigma$, i.e., 
$$ \Sigma^{+} = \{ u_{1}u_{2} \ldots u_{n} \mid n \in \mathbb{N}, u_{i} \in \Sigma \text{ for } i \in \{ 1, \ldots, n \} \}.$$
The \emph{free monoid on $\Sigma$}, denoted $\Sigma^{\ast}$, is the union $\Sigma^{+} \cup \{ 1 \}$, where $1$ denotes the
empty string. (Here we assume that $1 \not \in \Sigma$ to avoid ambiguity.) The operations in $\Sigma^{+}$ and $\Sigma^{\ast}$ are concatenation.

We write $w_{1} \equiv w_{2}$ if $w_{1}$ and $w_{2}$ are equal as words in $\Sigma^{\ast}$.

\end{definition}

\begin{definition}
A \emph{semigroup presentation} $\mathcal{P} = \langle \Sigma \mid \mathcal{R} \rangle$ consists of an alphabet $\Sigma$ and
a set $\mathcal{R} \subseteq \Sigma^{+} \times \Sigma^{+}$. The elements of $\mathcal{R}$ are called \emph{relations}.
\end{definition}

\begin{remark}
A relation $(w_{1}, w_{2}) \in \mathcal{R}$ can be viewed as an equality between the words $w_{1}$ and $w_{2}$. 
We use ordered pairs to describe these equalities because we will occasionally want to make a distinction between the 
left and right sides of a relation.

A semigroup presentation $\mathcal{P}$ determines a semigroup $S_{\mathcal{P}}$, just as a group presentation determines a group. We will, however, 
make essentially no use of this semigroup $S_{\mathcal{P}}$. Our interest is in braided diagrams over $\mathcal{P}$ (see below).
\end{remark}

\begin{definition} \label{def: braideddiagrams}
(Braided Semigroup Diagrams)
A \emph{frame} is a homeomorphic copy of $\partial([0,1]^{2}) = ( \{ 0, 1 \} \times [0,1]) \cup ( [0,1] \times \{ 0, 1 \} )$. A frame
has a \emph{top} side, $(0,1) \times \{ 1 \}$, a \emph{bottom} side, $(0,1) \times \{ 0 \}$, and \emph{left} and \emph{right} sides,
$\{ 0 \} \times [0,1]$ and $\{ 1 \} \times [0,1]$, respectively. The top and bottom of a frame have obvious left to right orderings.

A \emph{transistor} is a homeomorphic copy of $[0,1]^{2}$. A transistor has top, bottom, left, and right sides, just as a frame does. The
top and bottom of a transistor also have obvious left to right orderings.

A \emph{wire} is a homeomorphic copy of $[0,1]$. Each wire has a bottom $0$ and a top $1$.

Let $\mathcal{P} = \langle \Sigma \mid \mathcal{R} \rangle$ be a semigroup presentation. Let $\mathcal{T}(\Delta)$ be a finite
(possibly empty) set of transistors. Let $\mathcal{W}(\Delta)$ be a finite, nonempty  set of wires. We let $F(\Delta) = \partial ( [0,1]^{2})$ be
a frame. We let $\ell_{\Delta}: \mathcal{W}(\Delta) \rightarrow \Sigma$ be an arbitrary function, called the \emph{labelling function}.

For each wire $W \in \mathcal{W}(\Delta)$, we choose a point $t(W)$ on the bottom of a transistor, or on the top of the frame, and a point $b(W)$
on the top of a transistor, or on the bottom of the frame. The points $t(W)$ and $b(W)$ are called the \emph{top} and \emph{bottom contacts} of $W$, respectively.

We attach the top of each wire $W$ to $t(W)$ and the bottom of $W$ to $b(W)$. The resulting topological space $\Delta$ is called a \emph{braided diagram over
$\mathcal{P}$} if the following additional conditions are satisfied:

\begin{enumerate}
\item If $W_{i}$, $W_{j} \in \mathcal{W}(\Delta)$, $t(W_{i}) = t(W_{j})$ only if $W_{i} = W_{j}$, and $b(W_{i}) = b(W_{j})$ only if $W_{i} = W_{j}$.
In other words, the disjoint union of all of the wires maps injectively into the quotient.
\item We consider the top of some transistor $T \in \mathcal{T}(\Delta)$. Reading from left to right, we find contacts 
$$ b(W_{i_{1}}), b(W_{i_{2}}), \ldots, b(W_{i_{n}}),$$
where $n \geq 0$. The word $\ell_{t}(T) = \ell(W_{i_{1}}) \ell(W_{i_{2}}) \ldots \ell(W_{i_{n}})$ is called the \emph{top label of $T$}.
Similarly, reading from left to right along the bottom of $T$, we find contacts
$$ t(W_{j_{1}}), t(W_{j_{2}}), \ldots, t(W_{j_{m}}),$$
where $m \geq 0$. The word $\ell_{b}(T) = \ell(W_{j_{1}}) \ell(W_{j_{2}}) \ldots \ell(W_{j_{m}})$ is called the \emph{bottom label of $T$}.
We require that, for any $T \in \mathcal{T}(\Delta)$, either $(\ell_{t}(T), \ell_{b}(T)) \in \mathcal{R}$ or $(\ell_{b}(T), \ell_{t}(T)) \in \mathcal{R}$.
(We emphasize that it is not sufficient for $\ell_{t}(T)$ to be equivalent to $\ell_{b}(T)$ modulo the relation $\sim$ determined by $\mathcal{R}$. Note also that this condition implies that
$\ell_{b}(T)$ and $\ell_{t}(T)$ are both non-empty, since $\mathcal{P}$ is a semigroup presentation. In particular, each transistor has wires attached to its top and bottom faces.)
\item We define a relation $\preceq$ on $\mathcal{T}(\Delta)$ as follows. Write $T_{1} \preceq T_{2}$ if there is some wire $W$ such that
$t(W) \in T_{2}$ and $b(W) \in T_{1}$. We require that the transitive closure $\dot{\preceq}$ of $\preceq$ be a strict partial order on $\mathcal{T}(\Delta)$.
\end{enumerate}
\end{definition}

\begin{definition} \label{def: (a,b)diagrams}
Let $\Delta$ be a braided diagram over $\mathcal{P}$. Reading from left to right across the top of the frame $F(\Delta)$,
we find contacts 
$$ t(W_{i_{1}}), t(W_{i_{2}}), \ldots, t(W_{i_{n}}),$$
for some $n \geq 1$. The word $\ell(W_{i_{1}})\ell(W_{i_{2}}) \ldots \ell(W_{i_{n}}) = \ell_{t}(\Delta)$
is called the \emph{top label of $\Delta$}. We can similarly define the \emph{bottom label of $\Delta$}, $\ell_{b}(\Delta)$.
We say that $\Delta$ is a \emph{braided $(\ell_{t}(\Delta), \ell_{b}(\Delta))$-diagram over $\mathcal{P}$}.
\end{definition}

\begin{remark} \label{rem: caution} 
One should note that braided diagrams, despite the name, are not truly braided. In fact, two braided diagrams are equivalent
(see Definition \ref{def: equivalentdiagrams}) if there is a certain type of marked homeomorphism between them. Equivalence therefore does not depend
on any embedding into a larger space. Braided diagram groups (as defined in Theorem \ref{thm: braideddiagramgroups}) also seem to have little in common 
with Artin's braid groups.  
\end{remark}

\begin{example}
Let $\mathcal{P} = \langle a, b, c \mid ab=ba, ac=ca, bc=cb \rangle$. Figure \ref{figure1} shows an example of a braided $(aabc, acba)$-diagram
over the semigroup presentation $\mathcal{P}$. The frame is the box formed by the dashed line. The wires that appear to cross in the figure do not
really touch, and it is unnecessary to specify which wire passes over the other one. See Remark \ref{rem: caution}.
\begin{center}
\begin{figure}[!h]
\includegraphics{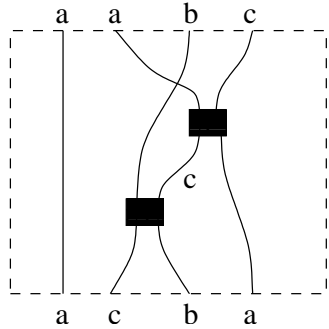}
\caption{A braided $(aabc, acba)$-diagram over the semigroup presentation $\mathcal{P} = \langle a, b, c \mid ac=ca, ab=ba, bc=cb \rangle$.}
\label{figure1}
\end{figure}
\end{center}
\end{example}

\begin{definition}
(Concatenation of braided diagrams)
Let $\Delta_{1}$ and $\Delta_{2}$ be braided diagrams over $\mathcal{P}$. We suppose that $\Delta_{1}$ 
is a $(w_{1}, w_{2})$-diagram and $\Delta_{2}$ is a $(w_{2},w_{3})$-diagram. We can multiply $\Delta_{1}$ and $\Delta_{2}$ by stacking them. More explicitly,
we remove the bottom of the frame of $\Delta_{1}$ and the top of the frame of $\Delta_{2}$, and then glue together the wires in order from left to right. This gluing is compatible with the labeling of the wires, since the bottom label of $\Delta_{1}$ is the same as the top label of $\Delta_{2}$. The result
is a braided diagram $\Delta_{1} \circ \Delta_{2}$, called the \emph{concatenation} of $\Delta_{1}$ and $\Delta_{2}$. 
\end{definition}

\begin{definition}
(Dipoles)
Let $\Delta$ be a braided semigroup diagram over $\mathcal{P}$. We say that the transistors $T_{1}, T_{2} \in \mathcal{T}(\Delta)$, $T_{1} \, \preceq \, T_{2}$,
form a \emph{dipole} if:
\begin{enumerate}
\item the bottom label of $T_{1}$ is the same as the top label of $T_{2}$, and
\item there are wires $W_{i_{1}}, W_{i_{2}}, \ldots, W_{i_{n}} (n \geq 1)$ such that the bottom contacts $T_{2}$, read from left to right, are
precisely $$t(W_{i_{1}}), t(W_{i_{2}}), \ldots, t(W_{i_{n}})$$
and the top contacts of $T_{1}$, read from left to right, are precisely
$$ b(W_{i_{1}}), b(W_{i_{2}}), \ldots, b(W_{i_{n}}).$$
\end{enumerate}
Define a new braided diagram as follows. Remove the transistors $T_{1}$ and $T_{2}$ and all of the wires $W_{i_{1}}, \ldots, W_{i_{n}}$ connecting the top
of $T_{1}$ to the bottom of $T_{2}$. Let $W_{j_{1}}, \ldots, W_{j_{m}}$ be the wires attached (in that order) 
to the top of $T_{2}$, and let $W_{k_{1}}, \ldots, W_{k_{m}}$ be the wires attached to the bottom of $T_{1}$. We glue the bottom of
$W_{j_{\ell}}$ to the top of $W_{k_{\ell}}$. There is a natural well-defined labelling function on the resulting wires, since 
$\ell(W_{j_{\ell}}) = \ell(W_{k_{\ell}})$ by our assumptions. We say that the new diagram $\Delta'$ is obtained from $\Delta$ by \emph{reducing the dipole 
$(T_{1}, T_{2})$}. The inverse operation is called \emph{inserting a dipole}.
\end{definition}

\begin{definition} \label{def: equivalentdiagrams}
(Equivalent Diagrams)
We say that two diagrams $\Delta_{1}$, $\Delta_{2}$ are \emph{equivalent} if there is a homeomorphism 
$\phi: \Delta_{1} \rightarrow \Delta_{2}$ that preserves the labels on the wires, restricts to a homeomorphism
$\phi_{|}: F(\Delta_{1}) \rightarrow F(\Delta_{2})$, preserves the tops and bottoms of the transistors and frame, and preserves
the left to right orientations on the transistors and the frame. We write $\Delta_{1} \equiv \Delta_{2}$.
\end{definition}

\begin{center}
\begin{figure}[!h]
\includegraphics{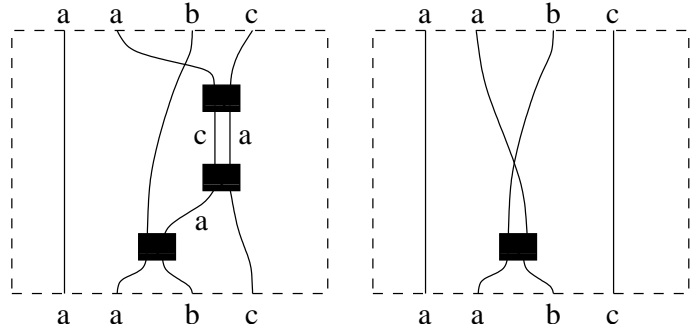}
\caption{The diagram on the right is obtained from the one on the left by reduction of a dipole.}
\label{figure2}
\end{figure}
\end{center}

\begin{definition} \label{def: equivalentmodulodipoles}
(Equivalent Modulo Dipoles; Reduced Diagram)
We say that $\Delta$ and $\Delta'$ are \emph{equivalent modulo dipoles}
if there is a sequence $\Delta \equiv \Delta_{1} \equiv \Delta_{2} \equiv \ldots \equiv \Delta_{n} \equiv \Delta'$,
where $\Delta_{i+1}$ is obtained from $\Delta_{i}$ by either inserting or removing a dipole, for $i \in \{ 1, \ldots, n-1 \}$. We write $\Delta = \Delta'$.
(The relation of equivalence modulo dipoles is indeed an equivalence relation -- see \cite{Far3}.)

A braided diagram $\Delta$ over a semigroup presentation is called \emph{reduced} if it contains no dipoles. Each equivalence class modulo dipoles
contains a unique reduced diagram \cite{Far3}.
\end{definition}

\begin{example}
In Figure \ref{figure2}, we have two braided diagrams over the semigroup presentation $\mathcal{P} = \langle a, b, c \mid ab=ba, ac=ca, bc=cb \rangle$.
The two rightmost transistors in the diagram on the left form a dipole, and the diagram on the right is the result of reducing that dipole. 
\end{example}

\begin{theorem} \label{thm: braideddiagramgroups} \cite{Far3}
Let $\mathcal{P} = \langle \Sigma \mid \mathcal{R} \rangle$ be a semigroup presentation, and let $w \in \Sigma^{+}$. We let $D_{b}(\mathcal{P},w)$ denote the set of equivalence classes of braided $(w,w)$-diagrams modulo dipoles. The operation of concatenation induces a well-defined group operation on $D_{b}(\mathcal{P},w)$. 
This group $D_{b}(\mathcal{P},w)$ is called \emph{the braided diagram group over $\mathcal{P}$ based at $w$}.
\end{theorem}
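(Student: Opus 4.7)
The plan is to verify the group axioms in turn, with the work concentrated in well-definedness and the construction of inverses.

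\textbf{Well-definedness.} Concatenation is patently compatible with the equivalence relation $\equiv$ of Definition \ref{def: equivalentdiagrams}: a label- and orientation-preserving homeomorphism of $\Delta_{1}$ extends by the identity on $\Delta_{2}$ (and by the obvious homeomorphism of the common seam) to a homeomorphism of the concatenation. The substantive point is that concatenation descends to equivalence classes modulo dipoles. Suppose $\Delta_{1}'$ is obtained from $\Delta_{1}$ by inserting or removing a dipole $(T_{1},T_{2})$. The two transistors involved sit entirely inside $\Delta_{1}$ and never touch the seam along which $\Delta_{2}$ is attached, so the same dipole move is available inside $\Delta_{1} \circ \Delta_{2}$. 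Hence $\Delta_{1}\circ \Delta_{2} = \Delta_{1}'\circ \Delta_{2}$ in $D_{b}(\mathcal{P},w)$, and the analogous argument handles moves in $\Delta_{2}$.

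\textbf{Associativity and identity.} Associativity is immediate, since $(\Delta_{1}\circ \Delta_{2})\circ \Delta_{3}$ and $\Delta_{1}\circ (\Delta_{2}\circ \Delta_{3})$ are in fact $\equiv$ after rescaling vertical coordinates. The identity is the class of the \emph{trivial} $(w,w)$-diagram $1_{w}$, consisting of an empty transistor set and $|w|$ parallel vertical wires labeled by the letters of $w$. Concatenation with $1_{w}$ on either side produces a diagram homeomorphic to the original, with no dipoles to reduce.

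\textbf{Inverses.} Given a $(w,w)$-diagram $\Delta$, define $\Delta^{-1}$ by reflecting $\Delta$ across a horizontal line; this swaps the tops and bottoms of transistors and of the frame while preserving labels and left-to-right orderings, so $\Delta^{-1}$ is again a valid braided $(w,w)$-diagram. In the concatenation $\Delta \circ \Delta^{-1}$, each transistor $T \in \mathcal{T}(\Delta)$ has a mirror partner $T^{\ast} \in \mathcal{T}(\Delta^{-1})$ sitting directly across the seam. I would argue by induction on $|\mathcal{T}(\Delta)|$ that $\Delta \circ \Delta^{-1} = 1_{w}$. For the inductive step, choose $T$ that is $\dot{\preceq}$-minimal in $\Delta$; then every wire attached to the bottom of $T$ terminates at the bottom of the frame of $\Delta$ and, after concatenation, continues straight across the seam to the top of $T^{\ast}$. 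Labels and left-to-right orders of the resulting bundle agree by mirror symmetry, so $(T^{\ast},T)$ satisfies the dipole conditions. Reducing it produces the concatenation of a smaller diagram with its mirror, and the induction closes. A symmetric argument handles $\Delta^{-1}\circ \Delta$.

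\textbf{Main obstacle.} The delicate step is the induction for inverses: after reducing $(T^{\ast},T)$, one must identify the remaining diagram with $\Delta' \circ (\Delta')^{-1}$ for a smaller $\Delta'$, obtained from $\Delta$ by deleting $T$ and rerouting its top-contact wires down through the former location of $T$ to the bottom of the frame. Minimality of $T$ is precisely what makes this rerouting legitimate, since no transistor of $\Delta$ lies below $T$ to obstruct it. A secondary point used throughout is the uniqueness of reduced representatives asserted in Definition \ref{def: equivalentmodulodipoles}: this is what certifies that $1_{w}$ is a genuine two-sided identity rather than merely an idempotent in $D_{b}(\mathcal{P},w)$.
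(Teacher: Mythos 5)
Your proposal is correct and follows the standard argument for this result (which the paper itself does not prove but cites from \cite{Far3}): inverses via reflection across a horizontal line, and cancellation of $\Delta\circ\Delta^{-1}$ by inductively reducing the mirror-pair dipole at a $\dot{\preceq}$-minimal transistor, whose minimality guarantees that its bottom wires run straight to the seam so that the dipole conditions hold. The only quibble is that your final remark is unnecessary: $\Delta\circ 1_{w}\equiv\Delta$ holds outright, so $1_{w}$ is a two-sided identity without any appeal to uniqueness of reduced representatives.
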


\section{Groups defined by finite similarity structures} \label{section:fss}

\subsection{Review of ultrametric spaces and finite similarity structures}
We now give a quick review of finite similarity structures on  compact ultrametric spaces, as defined in Hughes \cite{HugLSHP}. Most of this subsection is taken directly from 
\cite{FH}.

\begin{definition} \label{def:umetric} An {\it ultrametric space} is a metric space $(X,d)$ such that $d(x,y)\leq\max\{d(x,z), d(z,y)\}$ for all
$x,y,z\in X$.
\end{definition}

\begin{lemma} \label{lemma:ultrametric}
Let $X$ be an ultrametric space.
\begin{enumerate}
\item Let $B_{r}(x)$ be an open metric ball in $X$. If $y \in B_{r}(x)$, then $B_{r}(x) = B_{r}(y)$.
\item If $B_{1}$ and $B_{2}$ are open metric balls in $X$, then either the balls are disjoint, or one is contained in the other.
\item Every open ball in $X$ is a closed set, and every closed ball in $X$ is an open set.
\item If $X$ is compact, then each open ball $B$ is contained in at most finitely many distinct open balls of $X$.
\item If $X$ is compact, then each open ball in $X$ is a closed ball (possibly of a different radius), and each closed ball is an open ball. 
\item If $X$ is compact and $x$ is not an isolated point, then each open ball $B_{r}(x)$ is partitioned by its maximal proper open subballs, which are finite in number.
\end{enumerate}
\qed
\end{lemma}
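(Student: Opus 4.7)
The plan is to derive (1)--(3) directly from the strong triangle inequality, and to obtain (4)--(6) by combining this with a single compactness-based observation: in a compact ultrametric space, the distances $\{d(y,z):z\in X\}$ from a fixed point cannot accumulate from above at any positive value. This principle follows because if $d(y,z_n)$ is strictly decreasing with positive limit $d$, then for $n<m$ the strict inequality $d(y,z_n)>d(y,z_m)$ together with the ultrametric law forces $d(z_n,z_m)=d(y,z_n)>d>0$, so $(z_n)$ has no Cauchy subsequence.

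For (1), each of $B_r(y)\subseteq B_r(x)$ and the reverse inclusion follows at once from $d(z,x)\leq\max\{d(z,y),d(y,x)\}<r$. Part (2) is then immediate: if the balls meet, a common point serves as a center for both by (1), so the one of smaller radius lies inside the other. For (3), the complement of $B_r(x)$ is open because $d(y,x)\geq r>d(z,y)$ forces $d(z,x)=d(y,x)\geq r$; and for the closed ball, any $y\in\bar B_r(x)$ has $B_r(y)\subseteq\bar B_r(x)$ by the same inequality with weak bound.

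For (4), I would use (1) to write any open ball containing $B$ as $B_s(y)$ for a fixed $y\in B$; an infinite strict ascending chain $B_{s_1}(y)\subsetneq B_{s_2}(y)\subsetneq\cdots$ of such balls yields points $z_i\in B_{s_{i+1}}(y)\setminus B_{s_i}(y)$ with $d(z_i,z_j)=d(y,z_j)\geq s_1>0$ for all $i<j$, contradicting compactness. Part (5) follows by observing that $\{d(y,z):z\in B_r(y)\}$ is a compact subset of $[0,r)$ (using (3) to see $B_r(y)$ is closed and hence compact), so it attains its supremum $s^*<r$, giving $B_r(y)=\bar B_{s^*}(y)$; dually, the no-accumulation principle at $r$ produces $r'>r$ with $\bar B_r(y)=B_{r'}(y)$ (or $\bar B_r(y)=X$ if no points lie outside).

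The main work is in (6). For each $y\in B_r(x)$, set $s^*=\sup\{d(y,z):z\in B_r(x)\}$; this is attained and strictly less than $r$ by the argument used in (5), applied to $B_r(x)=B_r(y)$. Define $M_y:=B_{s^*}(y)$; using (1) and (2) one checks that $M_y$ is the unique maximal proper open subball of $B_r(x)$ containing $y$. The hypothesis that $x$ is not isolated forces $B_r(x)$ to contain a point other than $x$, hence at least two points, which in turn gives $s^*>0$ for every $y$, so each $M_y$ is a genuine open ball. Distinct $M_y$'s are disjoint by (2) and uniqueness, so they partition $B_r(x)$; finiteness follows because $B_r(x)$ is compact (closed in $X$ by (3)) and the partition is by pairwise disjoint open sets. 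The most delicate step is checking that $M_y$ really is both proper and maximal for every $y$, which is exactly where the non-isolated hypothesis gets consumed.
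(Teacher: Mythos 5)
The paper states this lemma with a \qed and no argument at all, so there is nothing of the authors' to compare against: you are supplying a proof where they assert a standard fact. Your overall strategy --- the isoceles property of ultrametrics for (1)--(3), plus the observation that distances from a fixed point in a compact ultrametric space cannot accumulate at a positive value --- is the right one, and parts (1), (2), (3), (5), and (6) check out. In particular your treatment of (6) is complete: $M_y=B_{s^*}(y)$ with $s^*$ the attained supremum of distances within the ball is proper because $s^*$ is attained, is maximal because any strictly larger subball centered at $y$ must contain a point at distance $s^*$ and hence all of $B_r(y)$, is unique by (2), and finiteness follows from covering the compact set $B_r(x)$ by pairwise disjoint open sets.

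The one genuine gap is in (4). You reduce ``infinitely many open balls contain $B$'' to ``there is an infinite strictly ascending chain $B_{s_1}(y)\subsetneq B_{s_2}(y)\subsetneq\cdots$,'' but that reduction is not justified. By (1) and (2) the balls containing $B$ do form a chain under inclusion, but an infinite chain need only contain an infinite \emph{monotone} sequence, and that sequence could be descending: a priori the chain could have order type $1+\omega^{*}$, i.e.\ $B$ together with balls $C_1\supsetneq C_2\supsetneq\cdots\supseteq B$, and your argument says nothing about this case. It is ruled out by the same computation you already use: choosing $z_i\in C_i\setminus C_{i+1}$, each $z_i$ lies outside $C_{i+1}\supseteq B=B_t(y)$, so $d(y,z_i)\geq t$, the values $d(y,z_i)$ are pairwise distinct, and the isoceles property gives $d(z_i,z_j)\geq t>0$ for all $i\neq j$, contradicting compactness. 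The cleanest repair is to strengthen your accumulation principle to: for fixed $y$ and any $t>0$, the set $\{d(y,z):z\in X\}\cap[t,\infty)$ is finite (an infinite subset would contain a strictly monotone sequence of distances, and distinct distances from $y$ force pairwise distances at least $t$). Both chain directions in (4), and the step in (5) producing $r'>r$, then follow at once.
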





\begin{convention}
We assume for the rest of the section that $X$ is a compact ultrametric space. By Lemma \ref{lemma:ultrametric}(5), open balls are closed balls, and closed balls are open balls,
so we can refer to both simply as balls, and we will follow this practice from now on.
\end{convention}

\begin{definition}
Let $f: X \rightarrow Y$ be a function between metric spaces. We say that $f$ is a \emph{similarity} if there is a constant $C>0$ such that
$d_{Y}(f(x_{1}), f(x_{2})) = Cd_{X}(x_{1},x_{2})$, for all $x_{1}$ and $x_{2}$ in $X$.
\end{definition}

\begin{definition}
\label{def:fin sim struct}
A
 {\it finite similarity structure for $X$} is a function
$\sm_X$ that assigns to
each ordered pair $B_1, B_2$ of balls in $X$
a (possibly empty)
set $\sm_X(B_1,B_2)$ of surjective similarities
$B_1\to B_2$ such that whenever 
$B_1, B_2, B_3$ are balls in $X$, the following properties
hold:
\begin{enumerate}
\item (Finiteness) $\sm_X(B_1,B_2)$ is a finite set.
\item (Identities) $\id_{B_1}\in\sm_X(B_1,B_1)$.
\item (Inverses) If $h\in\sm_X(B_1,B_2)$, then $h^{-1}\in\sm_X(B_2,B_1)$.
\item (Compositions) 
If $h_1\in\sm_X(B_1,B_2)$ and $h_2\in\sm_X(B_2,B_3)$, then
$h_2h_1\in\sm_X(B_1,B_3)$.
\item (Restrictions)
If $h\in\sm_X(B_1,B_2)$ and $B_3\subseteq B_1$, 
then $$h|B_3\in\sm_X(B_3,h(B_3)).$$
\end{enumerate}
\end{definition}

\begin{definition}
A homeomorphism $h\co X\to X$ is \emph{locally determined by $\sm_{X}$} provided that for every $x \in X$, there exists a ball
$B'$ in $X$ such that $x \in B'$, $h(B')$ is a ball in $X$, and $h|B'\in\sm(B', h(B'))$.
\end{definition}


\begin{definition}
The \emph{finite similarity structure} 
(\emph{FSS}) \emph{group} 
$\Gamma(\sm_X)$ is the set of all homeomorphisms $h\co X\to X$ such that $h$ is locally determined by
$\sm_X$.
\end{definition}

\begin{remark}
The fact that $\Gamma(\sm_{X})$ is a group under composition is due to Hughes \cite{HugLSHP}.
\end{remark}

\subsection{A description of the homeomorphisms determined by a similarity structure} In this subsection, we offer a somewhat simpler description
of the elements in the groups $\Gamma(\sm_{X})$ (Proposition \ref{prop:simpledescription}), which shows that elements $\gamma \in \Gamma(\sm_{X})$ can be described in a manner reminiscent of the tree pair representatives for elements in Thompson's group $V$ (see \cite{CFP}). 

\begin{definition}
We define the \emph{standard partitions} of $X$ inductively as follows.
\begin{enumerate}
\item $\{ X \}$ is a standard partition. 
\item If $\mathcal{P} = \{ \widehat{B}_{1}, \ldots, \widehat{B}_{n} \}$ is a standard partition, and $\{ B_{1}, \ldots, B_{m} \}$ is the partition of $\widehat{B}_{i}$ into maximal proper subballs, then
$(\mathcal{P} - \{ \widehat{B}_{i} \}) \cup \{ B_{1}, \ldots, B_{m} \}$ is also a standard partition.
\end{enumerate}
Clearly, each standard partition is a partition of $X$ into balls.
\end{definition}

\begin{lemma} \label{lemma:standard}
Every partition $\mathcal{P}$ of $X$ into balls is standard. 
\end{lemma}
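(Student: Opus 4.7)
The plan is to show that any ball-partition $\mathcal{P}$ of $X$ can be reached from $\{X\}$ by iterated standard subdivision. The key observation is that $\mathcal{P}$ itself must be finite: by Lemma~\ref{lemma:ultrametric}(3) each ball in $\mathcal{P}$ is open, so $\mathcal{P}$ is a pairwise disjoint open cover of the compact space $X$, and compactness forces $|\mathcal{P}| < \infty$.

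Next I would introduce the auxiliary collection
\[
\mathcal{T} = \{ B \subseteq X : B \text{ is a ball and } B \text{ is a union of elements of } \mathcal{P} \}.
\]
Note that $X \in \mathcal{T}$ and $\mathcal{P} \subseteq \mathcal{T}$. Using Lemma~\ref{lemma:ultrametric}(4), each element of $\mathcal{P}$ is contained in only finitely many balls of $X$, and every $B \in \mathcal{T}$ contains at least one element of $\mathcal{P}$; combined with finiteness of $\mathcal{P}$ this gives $|\mathcal{T}| < \infty$. The technical heart of the argument is the following claim: \emph{if $B \in \mathcal{T} \setminus \mathcal{P}$, then every maximal proper subball of $B$ lies in $\mathcal{T}$.} To prove this, observe first that $B$ cannot be an isolated point (else $B$ would itself be the unique element of $\mathcal{P}$ contained in it, forcing $B \in \mathcal{P}$), so by Lemma~\ref{lemma:ultrametric}(6) its maximal proper subballs $B_{1},\ldots,B_{m}$ partition $B$. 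Any $P \in \mathcal{P}$ with $P \subseteq B$ satisfies $P \subsetneq B$, so by Lemma~\ref{lemma:ultrametric}(2) applied in $B$, $P$ is comparable with each $B_i$ and thus lies inside a unique $B_i$. Hence each $B_i$ is exactly the union of those $P \in \mathcal{P}$ it contains, putting $B_i \in \mathcal{T}$.

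With the claim in hand, I would build $\mathcal{P}$ from $\{X\}$ inductively. Set $\mathcal{P}_0 = \{X\} \subseteq \mathcal{T}$. Given $\mathcal{P}_i \subseteq \mathcal{T}$ with $\mathcal{P}_i \neq \mathcal{P}$, use that $\mathcal{P}_i$ and $\mathcal{P}$ both partition $X$ into balls to pick some $\widehat{B} \in \mathcal{P}_i \setminus \mathcal{P}$; then $\widehat{B} \in \mathcal{T} \setminus \mathcal{P}$, so by the claim the standard subdivision
\[
\mathcal{P}_{i+1} = (\mathcal{P}_i \setminus \{\widehat{B}\}) \cup \{B_1,\ldots,B_m\}
\]
still sits inside $\mathcal{T}$. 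Since $|\mathcal{P}_{i+1}| > |\mathcal{P}_i|$ and $|\mathcal{T}| < \infty$, the process terminates at some $\mathcal{P}_N \subseteq \mathcal{T}$ with $\mathcal{P}_N \subseteq \mathcal{P}$; being partitions of $X$ with one contained in the other forces $\mathcal{P}_N = \mathcal{P}$, exhibiting $\mathcal{P}$ as a standard partition.

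The main obstacle I anticipate is verifying the claim that maximal proper subballs of a $B \in \mathcal{T} \setminus \mathcal{P}$ stay in $\mathcal{T}$; this is where the ultrametric hypothesis (specifically parts (2) and (6) of Lemma~\ref{lemma:ultrametric}) does the real work. Everything else — finiteness of $\mathcal{P}$ and $\mathcal{T}$, and termination of the subdivision process — is essentially bookkeeping once compactness is invoked.
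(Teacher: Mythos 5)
Your proof is correct, but it runs in the opposite direction from the paper's. The paper argues by induction on $|\mathcal{P}|$: it introduces the depth $d(B)$ of a ball (the number of distinct balls containing it), picks a ball $B \in \mathcal{P}$ of maximal depth, shows that all of the maximal proper subballs $B_0, \ldots, B_k$ of the parent $\widehat{B}$ of $B$ already belong to $\mathcal{P}$, and then coarsens to the smaller partition $(\mathcal{P} - \{B_0,\ldots,B_k\}) \cup \{\widehat{B}\}$, to which the inductive hypothesis applies. You instead build $\mathcal{P}$ forward from $\{X\}$, using the auxiliary finite family $\mathcal{T}$ of balls that are unions of elements of $\mathcal{P}$, together with the closure property that maximal proper subballs of members of $\mathcal{T} \setminus \mathcal{P}$ stay in $\mathcal{T}$; termination comes from $|\mathcal{T}| < \infty$ rather than from a decreasing induction parameter. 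Both arguments rest on the same ultrametric facts (parts (2), (4), and (6) of Lemma~\ref{lemma:ultrametric}) plus compactness for finiteness; yours has the mild advantage of exhibiting the subdivision sequence demanded by the definition of a standard partition explicitly, while the paper's avoids introducing $\mathcal{T}$ at the cost of the depth bookkeeping. One small step you should make explicit: to conclude that each $B_i$ is a union of elements of $\mathcal{P}$, you need that every $P \in \mathcal{P}$ \emph{meeting} $B$ is in fact contained in $B$, not only that those $P$ already known to lie in $B$ sort themselves into the $B_i$; this follows because $B$ is a union of $\mathcal{P}$-elements and $\mathcal{P}$ is a partition (any $P$ meeting $B$ meets, hence equals, one of the elements whose union is $B$), but it is not literally covered by the case $P \subseteq B$ that you treat.
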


\begin{proof}
We prove this by induction on $|\mathcal{P}|$. It is clearly true if $|\mathcal{P}| = 1$. We note that compactness implies that each 
partition $\mathcal{P}$ of $X$ into balls must be finite.

For an arbitrary ball $B \subseteq X$, we define the \emph{depth} of $B$, denoted $d(B)$, to be the number of distinct balls
of $X$ that contain $B$. (This definition is similar to Definition 3.19 from \cite{FH}.) We note that $d(B)$ is a positive integer by Lemma \ref{lemma:ultrametric}(4), and $d(X) = 1$.

Now we suppose that a partition $\mathcal{P}$ is given to us. We assume inductively that all partitions with smaller numbers of
balls are standard. By finiteness of $\mathcal{P}$, 
there is some ball $B$ having maximum depth $m$, where we can assume that $m \geq 2$. Let $\widehat{B}$ denote the ball
containing $B$ as a maximal proper subball. Clearly, $d(\widehat{B}) = m-1$. We let $\{ B_{0}, \ldots, B_{k} \}$ be the collection
of maximal proper subballs of $\widehat{B}$, where $B = B_{0}$ and $k \geq 1$.  

We claim that $\{ B_{0}, B_{1}, B_{2}, \ldots, B_{k} \} \subseteq \mathcal{P}$. Choose $x \in B_{i}$. Our assumptions
imply that $x$ is in some ball $B'$ of $\mathcal{P}$ such that $d(B') \leq m$. The only such balls are $B_{i}$, $\widehat{B}$, and
any balls that contain $\widehat{B}$. (This uses an appeal to Lemma \ref{lemma:ultrametric}(2).) Since $\widehat{B} \cap B_{0} \neq \emptyset$ and $B_{0} = B \in \mathcal{P}$,
the only possibility is that $B' = B_{i}$, since $\mathcal{P}$ is a partition. This proves that $\{ B_{0}, \ldots, B_{k} \} \subseteq \mathcal{P}$. 

Now we consider the partition $\mathcal{P}' = (\mathcal{P} - \{ B_{0}, \ldots, B_{k} \}) \cup \{ \widehat{B} \}$. 
This partition is standard by the induction hypothesis, and it follows directly that $\mathcal{P}$ itself is standard. 
\end{proof}

\begin{proposition} \label{prop:simpledescription}
Let $\sm_{X}$ be a finite similarity structure on $X$, and let $\gamma \in \Gamma(\sm_{X})$. There exist standard partitions $\mathcal{P}_{1} = \{ B_{1}, \ldots, B_{n} \}$ and $\mathcal{P}_{2}$ of $X$, a bijection $\phi: \mathcal{P}_{1} \rightarrow \mathcal{P}_{2}$, and elements $\sigma_{i} \in \sm(B_{i}, \phi(B_{i}))$
such that $\gamma_{\mid B_{i}} = \sigma_{i}$, for $i = 1, \ldots, n$. 

Moreover, we can arrange that the balls $B_{i}$ are maximal in the sense that if $B \subseteq X$ and $\gamma(B)$ are balls such 
that $\gamma_{\mid B} \in \sm_{X}(B, \gamma(B))$, then $B \subseteq B_{i}$, for some $i \in \{ 1, \ldots, n \}$.
\end{proposition}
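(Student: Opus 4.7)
The plan is to build the partition $\mathcal{P}_1$ directly out of the maximal balls on which $\gamma$ restricts to a member of $\sm_X$, and then verify that this collection actually partitions $X$, is standard, and that its image under $\gamma$ is also a partition into balls.

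First, for each $x\in X$, let $\mathcal{B}_x$ denote the collection of balls $B \subseteq X$ containing $x$ such that $\gamma(B)$ is a ball and $\gamma_{\mid B}\in\sm_X(B,\gamma(B))$. Because $\gamma$ is locally determined by $\sm_X$, $\mathcal{B}_x$ is nonempty. By Lemma \ref{lemma:ultrametric}(4), only finitely many balls of $X$ contain any given point, so $\mathcal{B}_x$ is finite; by Lemma \ref{lemma:ultrametric}(2), any two balls in $\mathcal{B}_x$ (both containing $x$) are nested. Hence $\mathcal{B}_x$ has a unique maximum element, which I will call $B_x$.

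Next I would check that $\{B_x : x\in X\}$ is a partition. If $y\in B_x$, then $B_x\in\mathcal{B}_y$, so $B_y\supseteq B_x$ by maximality; but then $x\in B_x\subseteq B_y$, so symmetrically $B_x\supseteq B_y$, giving $B_x=B_y$. Therefore two of the balls $B_x$, $B_z$ are either disjoint or identical, so $\mathcal{P}_1:=\{B_x : x\in X\}$ partitions $X$ into balls; compactness of $X$ forces $\mathcal{P}_1$ to be finite. By Lemma \ref{lemma:standard}, $\mathcal{P}_1$ is a standard partition. Writing $\mathcal{P}_1=\{B_1,\dots,B_n\}$ and setting $\phi(B_i)=\gamma(B_i)$ and $\sigma_i=\gamma_{\mid B_i}$, we have $\sigma_i\in\sm_X(B_i,\phi(B_i))$ by construction.

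Then I would verify that $\mathcal{P}_2:=\{\phi(B_1),\dots,\phi(B_n)\}$ is a standard partition. Each $\phi(B_i)=\gamma(B_i)$ is a ball by the defining property of $B_i$, and since $\gamma$ is a homeomorphism of $X$, the sets $\gamma(B_i)$ are pairwise disjoint and cover $X$. Thus $\mathcal{P}_2$ is a partition of $X$ into balls, so another application of Lemma \ref{lemma:standard} shows it is standard, and $\phi$ is a bijection.

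Finally, maximality is automatic from the construction: if $B\subseteq X$ is any ball with $\gamma(B)$ a ball and $\gamma_{\mid B}\in\sm_X(B,\gamma(B))$, pick any $x\in B$; then $B\in\mathcal{B}_x$, so $B\subseteq B_x$ by the maximality of $B_x$, and $B_x$ is one of the $B_i$. The only subtle point in the whole argument is the claim $B_x=B_y$ for $y\in B_x$, which is the main obstacle in the sense that it is exactly what forces the locally-defined maximal balls to fit together into a genuine partition; everything else is a direct consequence of the ultrametric lemma together with the already-established fact (Lemma \ref{lemma:standard}) that any partition into balls is standard.
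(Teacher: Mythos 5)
Your proposal is correct in outline but takes a genuinely different route from the paper, and it contains one misstated justification. The paper's proof is shorter and less self-contained: it takes the cover of $X$ by balls on which $\gamma$ is locally a similarity, extracts a finite subcover by compactness, passes to a partition using Lemma \ref{lemma:ultrametric}(2), sets $\mathcal{P}_2 = \gamma(\mathcal{P}_1)$, and then simply cites Lemma 3.7 of Hughes for the ``moreover'' clause. You instead build the partition directly out of the pointwise-maximal balls $B_x$, which proves the existence statement and the maximality statement simultaneously; the key step $B_x = B_y$ for $y \in B_x$ is exactly the content that the paper outsources to the cited lemma. This is a legitimate and arguably more transparent argument, at the cost of being longer.

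The slip: you claim that Lemma \ref{lemma:ultrametric}(4) implies only finitely many balls of $X$ contain a given point, hence $\mathcal{B}_x$ is finite. That is false in general --- in the Cantor set, say, every point lies in infinitely many balls, and by the Restrictions axiom every subball of a member of $\mathcal{B}_x$ containing $x$ is again in $\mathcal{B}_x$, so $\mathcal{B}_x$ is typically infinite. Lemma \ref{lemma:ultrametric}(4) only bounds the number of balls containing a fixed \emph{ball}. The conclusion you need (a unique maximum element of $\mathcal{B}_x$) still holds, by a one-line repair: fix any $B_0 \in \mathcal{B}_x$; every member of $\mathcal{B}_x$ is comparable with $B_0$ by Lemma \ref{lemma:ultrametric}(2), the members containing $B_0$ form a finite nonempty chain by Lemma \ref{lemma:ultrametric}(4) applied to $B_0$, and the top of that chain dominates all of $\mathcal{B}_x$. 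With that correction the rest of your argument goes through as written.
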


\begin{proof}
Since $\gamma$ is locally determined by $\sm_{X}$, we can find an open cover of $X$ by balls such that the restriction of $\gamma$ to each ball is a local similarity in the
$\sm_{X}$-structure. By compactness of $X$, we can pass to a finite subcover. An application of Lemma \ref{lemma:ultrametric}(2) allows us to pass to a subcover that is also 
a partition. We call this partition $\mathcal{P}_{1}$. We can then set $\mathcal{P}_{2} = \gamma( \mathcal{P}_{1})$. Both partitions are standard by Lemma \ref{lemma:standard}.

The final statement is essentially Lemma 3.7 from \cite{HugLSHP}.
\end{proof}

\section{Braided diagram groups and groups determined by finite similarity structures} \label{section:main}

\subsection{Braided diagram groups over tree-like semigroup presentations}
\begin{definition} \label{def: treelike}
A semigroup presentation $\mathcal{P} = \langle \Sigma \mid \mathcal{R} \rangle$ is \emph{tree-like} if,
\begin{enumerate}
\item every relation $(w_{1}, w_{2}) \in \mathcal{R}$ satisfies $|w_{1}| = 1$ and $|w_{2}| > 1$;
\item if $(a, w_{1}), (a, w_{2}) \in \mathcal{R}$, then $w_{1} \equiv w_{2}$.
\end{enumerate}
\end{definition}

\begin{example} \label{example:Thompson}
The generalized Thompson's groups $V_{d}$ are isomorphic to the braided diagram groups $D_{b}(\mathcal{P}, x)$, 
where $\mathcal{P} = \langle x \mid (x,x^{d}) \rangle$ is a tree-like semigroup presentation.
This fact was already proved in \cite{GubaSapir} and \cite{Far3}, and it is also a consequence of Theorem \ref{thm: bigisomorphism}. 
\end{example}  

\begin{example} \label{example:houghton}
Consider the graph $G_{n}$ made up from a disjoint union of $n$ rays:  $G_{n} = \{ 1, \ldots, n \} \times [0,\infty)$. 
We assume that each ray is given the standard CW-complex structure with a vertex at
each integer. We define the \emph{Houghton group} $H_{n}$ to be the set of bijections $h$ of the vertices $G_{n}^{0}$ such that
\begin{enumerate}
\item $h$ preserves adjacency, with at most finitely many exceptions, and
\item $h$ preserves ends: that is, for each $i \in \{ 1, \ldots, n \}$, there are $i_{1}, i_{2} \in \mathbb{N}$ such that $h( \{ i \} \times [i_{1}, \infty)) = \{ i \} \times [i_{2}, \infty)$.
\end{enumerate}
Ken Brown \cite{Brown1987} showed that $H_{n}$ is a group of type $F_{n-1}$ but not of type $F_{n}$.

We will sketch a proof that each Houghton group $H_{n}$ is a braided diagram group over a tree-like semigroup presentation. (It follows, in particular, that each of these
groups is $\Gamma(\sm_{X})$, for an appropriate compact ultrametric space $X$ and finite similarity structure $\sm_{X}$, by Theorem \ref{thm: bigisomorphism}.)
For $n \geq 2$, consider the semigroup presentation 
$$ \mathcal{P}_{n} = \langle a, r, x_{1}, \ldots, x_{n} \mid (r, x_{1}x_{2}x_{3} \ldots x_{n}), (x_{1}, ax_{1}), (x_{2}, ax_{2}), \ldots, (x_{n}, ax_{n})  \rangle.$$
Similarly, we can define $\mathcal{P}_{1} = \langle a, r \mid (r, ar) \rangle$.
We claim that $D_{b}(\mathcal{P}_{n}, r)$ is isomorphic to $H_{n}$. We sketch the proof for $n \geq 2$; the proof in case $n=1$ is very similar. 

The elements of $D_{b}(\mathcal{P}_{n}, r)$ can be expressed in the form $\Delta_{2} \circ \Delta_{1}^{-1}$, where each transistor in $\Delta_{i}$ is ``positive"; i.e., the top label of the transistor is the left side of a relation in $\mathcal{P}_{n}$, and the bottom label is the right side. (This is proved as part of the proof of Theorem \ref{thm: bigisomorphism}.)
We can think of each diagram $\Delta_{i}$ as a recipe for separating $G_{n}$ into connected components. The wires running between $\Delta_{1}$ and $\Delta_{2}$
in the concatenation $\Delta_{2} \circ \Delta_{1}^{-1}$ describe how these connected components should be matched by the bijection $h \in H_{n}$. To put it more explicitly, 
the relations represent the following operations:
\begin{enumerate}
\item the relation $(r, x_{1}x_{2} \ldots x_{n})$ describes the initial configuration $G_{n}$ of $n$ disjoint rays. The letters $x_{i}$ ($i \in \{ 1, \ldots, n \}$) represent the isomorphism types of the different rays.
The different subscripts prevent different ends of $G_{n}$ from being permuted nontrivially. (If we wish to remove the end-preserving condition above, we can simply replace the
$n$ distinct symbols $x_{1}$, $x_{2}$, $\ldots$, $x_{n}$ by the single symbol $x$.)
\item the relation $(x_{i}, ax_{i})$ (for $i \in \{ 1, \ldots, n \}$) represents the action of breaking the initial vertex away from the ray of isomorphism type $x_{i}$. The initial vertex of the ray gets the label $a$, and the new ray retains the label $x_{i}$ (since it is of the same combinatorial type as the original ray, and the new ray is a permissible target for the original ray
under the action of the Houghton group). The letter $a$ thus represents a single floating vertex. Any two such vertices can be matched by an element of the Houghton group, which 
is why we use a single label for all of these vertices.
\end{enumerate}

We illustrate how an $(r,r)$-diagram over $\mathcal{P}_{2}$ represents an element of $H_{2}$. Figure \ref{figure3} depicts an $(r,r)$-diagram $\Delta_{h}$ over $\mathcal{P}_{2}$.
This diagram $\Delta_{h}$ represents the element $h \in H_{2}$ that sends: $(2,n)$ to $(2,n-1)$, for each $n \geq 1$, $(1,n)$ to $(1,n+1)$ (for all $n$), and $(2,0)$ to $(1,0)$.
Note that the bottom portion of the diagram represents a subdivision of the domain, and the top portion represents a subdivision of the range.
It is straightforward to check that the indicated function does not change if we insert or remove dipoles.
\begin{center}
\begin{figure}[!h]
\includegraphics{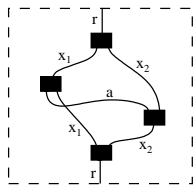}
\caption{The given $(r,r)$-diagram over $\mathcal{P}_{2}$ represents an element of Houghton's group $H_{2}$.}
\label{figure3}
\end{figure}
\end{center}
\end{example}

\begin{example} \label{example:Q}
The same principle can be used to exhibit the group $\Gamma = \mathrm{QAut}(\mathcal{T}_{2,c})$ as a braided diagram group over a tree-like
semigroup presentation. Here $\Gamma$ (as defined in \cite{BMN}) is the group of self-bijections $h$ of the vertices of the infinite ordered rooted binary tree $T$ such that
\begin{enumerate}
\item $h$ preserves adjacency, with at most finitely many exceptions, and
\item $h$ preserves the left-right ordering of the edges incident with and below a given vertex, again with at most finitely many exceptions.
\end{enumerate}
Consider the semigroup presentation $\mathcal{P} = \langle a, x \mid (x, xax) \rangle$. We claim that $D_{b}(\mathcal{P}, x)$ is isomorphic to $\Gamma$. Much of the discussion from the previous example carries over identically.
We will simply indicate how the single relation allows us to simulate breaking the binary tree into pieces. (Such a dissection of $T$ would be represented by
a positive diagram, as above. The wires connecting the bottoms of the positive diagrams $\Delta_{1}$ and $\Delta_{2}$ would again represent how the resulting pieces are matched by
a bijection.)

The letter $x$ represents the isomorphism type of the binary tree $T$. The relation represents breaking the tree at the root. The result of this operation yields a floating vertex
(represented by the letter $a$), and two new rooted binary trees (both represented by $x$). The first $x$ in $xax$ represents the left branch, and the second $x$ represents the
right. The description of the isomorphism of $D_{b}(\mathcal{P}, x)$ with $\Gamma$ now follows the general pattern of the previous example. 
\end{example}

\begin{remark} 
Example \ref{example:houghton} shows that the $F_{\infty}$ result of \cite{FH} cannot be extended to all groups determined by finite similarity structures (as defined in Section \ref{section:fss}). 

All of the groups in the above examples act properly on CAT(0) cubical complexes by a construction of \cite{Far3}.

We note also that the representation of the above groups as braided diagram groups suggests a method for producing embeddings into other groups, such as (perhaps most notably) Thompson's
group $V$. For instance, the group from Example \ref{example:Q} can be embedded into $V$ as follows. Given a braided diagram $\Delta$ over $\mathcal{P}$, systematically
replace each $a$ label with an $x$. The result is a braided diagram over the semigroup presentation $\mathcal{P}' = \langle x \mid (x,x^{3}) \rangle$. The indicated function 
$\phi: D_{b}(\mathcal{P}, x) \rightarrow D_{b}(\mathcal{P}', x)$ is
easily seen to be a homomorphism, and $\phi$ is injective since it sends reduced diagrams to reduced diagrams. We can now appeal to the fact that
$D_{b}(\mathcal{P}', x) \cong V_{3}$ (the $3$-ary version of Thompson's group $V$), and the latter group embeds in $V$ itself. 

The group from Example \ref{example:Q} was previously known to embed in $V$ by a result of \cite{BMN}.
\end{remark}

\subsection{Groups determined by small similarity structures}

\begin{definition} \label{def: smallsim} 
Let $X$ be a compact ultrametric space. We say that the finite similarity structure $\sm_{X}$ is \emph{small}
if, for every pair of balls $B_{1}, B_{2}$ in $X$, $|\sm_{X}(B_{1},B_{2})| \leq 1$.
\end{definition}

\begin{definition} \label{def:localorder}
Let $X$ be a compact ultrametric space endowed with a small similarity structure $\sm_{X}$. If $B \subseteq X$ is a ball in
$X$ that is not an isolated point, then a \emph{local ball order} at $B$ is an assignment of a linear order $<$ to the set $\{ \widehat{B}_{1}, \ldots, \widehat{B}_{n} \}$
of maximal proper subballs of $B$. A \emph{ball order} on $X$ is an assignment of such a linear order to each ball $B \subseteq X$ that is not a singleton. The ball order is 
\emph{compatible} with $\sm_{X}$ if each $h \in \sm_{X}(B_{1}, B_{2})$ induces an order-preserving bijection of the maximal proper subballs of $B_{1}$ and $B_{2}$,
for all choices of $B_{1}$ and $B_{2}$.
\end{definition}

\begin{lemma} \label{lemma:compatibleballorder}
Let $X$ be a compact ultrametric space endowed with a small similarity structure. There exists a ball order on $X$ that is compatible with $\sm_{X}$.
\end{lemma}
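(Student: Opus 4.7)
The plan is to transport a single arbitrary choice of order across each similarity-equivalence class of balls, using smallness to ensure the transport is well-defined.

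First, I would define a relation on the set of non-singleton balls of $X$ by setting $B_1 \sim B_2$ if and only if $\sm_X(B_1,B_2) \neq \emptyset$. The identities, inverses, and compositions axioms of Definition \ref{def:fin sim struct} immediately show that $\sim$ is an equivalence relation. Moreover, since every $h \in \sm_X(B_1,B_2)$ is a surjective similarity (hence a bijection that, together with its inverse, sends balls to balls), $h$ induces a bijection between the maximal proper subballs of $B_1$ and those of $B_2$; in particular, $B_1 \sim B_2$ forces $B_1$ to be a singleton iff $B_2$ is.

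Next, I would choose one representative $B_0$ from each equivalence class of non-singleton balls, and pick an arbitrary linear order on the (finitely many, by Lemma \ref{lemma:ultrametric}(6)) maximal proper subballs of $B_0$. For any other ball $B$ in the class of $B_0$, smallness guarantees a \emph{unique} similarity $h_B \in \sm_X(B_0,B)$; I would then define the local ball order at $B$ to be the pushforward under $h_B$ of the chosen order on the maximal proper subballs of $B_0$. Doing this for every equivalence class produces a ball order on $X$.

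To verify compatibility with $\sm_X$, suppose $g \in \sm_X(B_1,B_2)$. Then $B_1$ and $B_2$ lie in the same equivalence class, with common representative $B_0$ and unique transport similarities $h_{B_1}$ and $h_{B_2}$. By the composition axiom, $g \circ h_{B_1} \in \sm_X(B_0,B_2)$, and smallness forces $g \circ h_{B_1} = h_{B_2}$, i.e.\ $g = h_{B_2} \circ h_{B_1}^{-1}$. Consequently $g$ carries the pushforward order on the maximal proper subballs of $B_1$ to the pushforward order on those of $B_2$, which is exactly the order-preserving condition required in Definition \ref{def:localorder}.

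The only subtlety—and the one place where smallness is genuinely used—is the well-definedness of the pushforward construction; without the hypothesis $|\sm_X(B_0,B)| \leq 1$, different choices of transport similarity could yield different orders on $B$, and the compatibility computation at the end would break down. Everything else is a straightforward bookkeeping exercise with the axioms of a finite similarity structure.
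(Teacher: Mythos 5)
Your proposal is correct and follows essentially the same route as the paper: fix a representative in each $\sm_X$-class, choose an arbitrary order on its maximal proper subballs, and transport it via the unique similarity to every other ball in the class. Your explicit verification of compatibility (showing $g = h_{B_2}\circ h_{B_1}^{-1}$ by smallness and the composition axiom) is a welcome elaboration of a step the paper leaves implicit, and you correctly identify smallness as the hypothesis that makes the transport well-defined.
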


\begin{proof}
We recall a definition from \cite{FH}. 
Let $B \subseteq X$ be a metric ball. Let
$[B] = \{ B' \subseteq X \mid \sm_{X}(B, B') \neq \emptyset \}$; $[B]$ is called the \emph{$\sm_{X}$-class} of $B$.

From a given $\sm_{X}$-class of balls, choose a ball $B$. If $B$ is not a singleton, then there exists a collection of maximal proper subballs
$B_{1}$, $\ldots$, $B_{n}$ of $B$. Choose a linear order on this collection of balls; without loss of generality, $B_{1} < B_{2} < \ldots < B_{n}$. If $B'$ is another ball
in $[B]$, then we can let $h$ denote the unique element of $\sm_{X}(B, B')$. This $h$ carries the maximal proper subballs of $B$ into maximal proper subballs of $B'$, and thereby 
induces an order $h(B_{1}) < h(B_{2}) < \ldots < h(B_{n})$ on the maximal proper subballs of $B'$. This procedure gives a local ball order to each ball $B' \in [B]$.

We repeat this procedure for each $\sm_{X}$-class of balls. The result is a ball order on $X$ that is compatible with $\sm_{X}$.
\end{proof}

\begin{remark} \label{remark:ballorder}
A ball order on $X$ also determines a linear order on any given collection of pairwise disjoint balls in $X$. For let $\mathcal{C}$
be such a collection, and let $B_{1}, B_{2} \in \mathcal{C}$. There is a unique smallest ball $B \subseteq X$ that contains
both $B_{1}$ and $B_{2}$, by Lemma \ref{lemma:ultrametric}(4). Let $\{ \widehat{B}_{1}, \ldots, \widehat{B}_{n} \}$
be the collection of maximal proper subballs of $B$. By minimality of $B$, 
we must have that $B_{1}$ and $B_{2}$ are contained in distinct 
maximal proper subballs of $B$; say $B_{1} \subseteq \widehat{B}_{1}$ and $B_{2} \subseteq \widehat{B}_{2}$. We write
$B_{1} < B_{2}$ if and only if $\widehat{B}_{1} < \widehat{B}_{2}$. This defines a linear order on $\mathcal{C}$. 
The verification is straightforward.
\end{remark}

\begin{definition} \label{def: psimx}
Let $X$ be a compact ultrametric space with a small similarity structure $\sm_{X}$ and a compatible ball order.
Define a semigroup presentation $\mathcal{P}_{\sm_{X}} = \langle \Sigma \mid \mathcal{R} \rangle$
as follows. Let 
$$ \Sigma = \{ [B] \mid B \text{ is a ball in } X \}.$$
If $B \subseteq X$ is a ball, let $B_{1}, \ldots, B_{n}$ be the maximal proper subballs
of $B$, listed in order. If $B$ is a point, then $n=0$. We set
$$ \mathcal{R} = \{ ( [B], [B_{1}][B_{2}]\ldots[B_{n}]) \mid n \geq 1, B \text{ is a ball in } X \}.$$
\end{definition}

\begin{remark} \label{rem: psimx}
We note that $\mathcal{P}_{\sm_{X}}$ will always be a tree-like semigroup presentation, for any choice of
compact ultrametric space $X$, small similarity structure $\sm_{X}$, and compatible ball order.
\end{remark}

\subsection{The main theorem}

\begin{theorem} \label{thm: bigisomorphism}
If $X$ is a compact ultrametric space with a small similarity structure $\sm_{X}$ and compatible ball order, 
then 
$$ \Gamma(\sm_{X}) \cong D_{b}(\mathcal{P}_{\sm_{X}}, [X]).$$
Conversely, if $\mathcal{P} = \langle \Sigma \mid \mathcal{R} \rangle$ is a tree-like semigroup presentation,
and $x \in \Sigma$, then there is a compact ultrametric space $X_{\mathcal{P}}$, a small
finite similarity structure $\sm_{X_{\mathcal{P}}}$, and a compatible ball order such that
$$D_{b}(\mathcal{P}, x) \cong \Gamma(\sm_{X_{\mathcal{P}}}).$$  
\end{theorem}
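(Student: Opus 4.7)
The plan is to construct explicit isomorphisms in both directions, formalizing the idea that a standard partition of $X$ is encoded by a ``positive'' braided diagram (one in which every transistor has the left side of a relation on top and the right side on the bottom), and that an element of $\Gamma(\sm_{X})$ is essentially a pair of such partitions together with a label-preserving bijection between them.

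For the forward direction, given $\gamma \in \Gamma(\sm_{X})$, I would apply Proposition \ref{prop:simpledescription} to obtain a maximal standard partition $\{B_{1}, \ldots, B_{n}\}$, its image $\{\phi(B_{i})\}$, and similarities $\sigma_{i} = \gamma|_{B_{i}} \in \sm_{X}(B_{i}, \phi(B_{i}))$. Then I would build a braided $([X], [X])$-diagram $\Delta_{\gamma}$ in two halves: the bottom is a positive diagram $\Delta_{1}$ whose transistors record the steps of a chain refining $\{X\}$ to $\{B_{i}\}$, each step applying some relation $([B], [B_{1}]\cdots[B_{m}])$ of $\mathcal{P}_{\sm_{X}}$; the top is the inverse of the analogous positive diagram $\Delta_{2}$ for $\{\phi(B_{i})\}$; and the $i$-th connecting wire, of label $[B_{i}] = [\phi(B_{i})]$, records the matching $B_{i} \mapsto \phi(B_{i})$. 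Compatibility of the ball order is what makes the left-to-right order of maximal subballs agree with the order of letters on the right-hand sides of the relations. To see that $\gamma \mapsto [\Delta_{\gamma}]$ is a homomorphism, I would use the maximality clause of Proposition \ref{prop:simpledescription} to find a common refinement when composing $\gamma_{1} \gamma_{2}$, so that $\Delta_{\gamma_{1}} \circ \Delta_{\gamma_{2}}$ reduces to $\Delta_{\gamma_{1} \gamma_{2}}$. Injectivity is immediate, and for surjectivity one shows that any reduced braided $([X],[X])$-diagram over the tree-like presentation $\mathcal{P}_{\sm_{X}}$ decomposes uniquely as $\Delta_{2} \circ \Delta_{1}^{-1}$ with $\Delta_{i}$ positive; the two positive halves encode standard partitions of $X$, and smallness of $\sm_{X}$ canonically produces the required homeomorphism.

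For the converse direction, given a tree-like $\mathcal{P} = \langle \Sigma \mid \mathcal{R} \rangle$ and $x \in \Sigma$, I would build a rooted ordered tree $T_{\mathcal{P}, x}$ whose root has label $x$ and whose vertex labeled $a$ has, when the (by tree-likeness, unique) relation $(a, a_{1} \cdots a_{k}) \in \mathcal{R}$ exists, children labeled $a_{1}, \ldots, a_{k}$ from left to right, and is a leaf otherwise. Let $X_{\mathcal{P}}$ be the set of leaves together with the infinite descending rays from the root, equipped with the ultrametric $d(\xi, \eta) = 2^{-k}$, where $k$ is the depth of their deepest common vertex. Balls of $X_{\mathcal{P}}$ correspond bijectively to vertices of $T_{\mathcal{P}, x}$, each inheriting a $\Sigma$-label and a left-to-right order on its maximal subballs. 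Declare $\sm_{X_{\mathcal{P}}}(B_{1}, B_{2})$ to contain the single tree-isomorphism-induced similarity when $B_{1}$ and $B_{2}$ share a $\Sigma$-label, and to be empty otherwise; this is manifestly small, satisfies the axioms of Definition \ref{def:fin sim struct}, and is compatible with the inherited ball order. The isomorphism $D_{b}(\mathcal{P}, x) \cong \Gamma(\sm_{X_{\mathcal{P}}})$ then follows by invoking the forward direction, after observing that $\mathcal{P}_{\sm_{X_{\mathcal{P}}}}$ recovers $\mathcal{P}$ up to relabeling $\Sigma$ by $\sm_{X_{\mathcal{P}}}$-classes.

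The hard part, in both directions, is well-definedness modulo dipoles. In the forward direction this reduces to showing that any two refinement chains from $\{X\}$ to $\{B_{i}\}$ yield positive diagrams differing by dipole insertions and removals; the underlying fact is that two refinements at disjoint balls commute, and that swapping their order in the diagram is exactly a dipole move. For the converse direction the subtlety is verifying the composition and restriction axioms for $\sm_{X_{\mathcal{P}}}$ while keeping it small: condition (2) of tree-likeness, uniqueness of the right-hand side for each letter, is exactly what is needed, since it forces equality of labels of two balls to propagate canonically down to all of their subballs, and hence forces the corresponding similarity to be unique.
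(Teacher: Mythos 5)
Your architecture is essentially the paper's: encode $\gamma$ by a pair of partitions and a matching bijection, send this to a concatenation of a positive diagram, a permutation of wires, and a negative diagram, prove surjectivity via the positive/negative decomposition of reduced diagrams over a tree-like presentation, and realize the converse on the ends of a labelled ordered tree with the singleton similarity structure induced by label- and order-preserving subtree isomorphisms. However, you have located the dipole moves in the wrong place, and this is where the real work lies. The positive diagram attached to a partition $\mathcal{P}$ does not depend on a refinement chain at all: it is determined combinatorially (one transistor for each ball properly containing a ball of $\mathcal{P}$, one wire for each ball containing a ball of $\mathcal{P}$), and two chains producing the same partition yield \emph{equivalent} diagrams in the sense of Definition \ref{def: equivalentdiagrams} --- no dipole is involved. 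Indeed, over a tree-like presentation two positive transistors can never form a dipole, since the dipole condition forces one member of the pair to have a single-letter bottom label, i.e.\ to be negative; and ``swapping the order of two refinements at disjoint balls'' changes nothing, because transistors carry only the partial order $\dot{\preceq}$, not a linear one. The dipoles that actually matter arise when one subdivides a \emph{matched pair} $(B,\phi(B))$ simultaneously in the domain and range partitions: the new positive transistor in the half coming from $\mathcal{P}_{2}$ and the new negative transistor in the half coming from $\mathcal{P}_{1}^{-1}$ cancel as a dipole, and smallness plus order-compatibility of $\sm_{X}$ are needed to see that the wires match up correctly. This subdivision--dipole correspondence is precisely what you need to make ``$\Delta_{\gamma_{1}}\circ\Delta_{\gamma_{2}}$ reduces to $\Delta_{\gamma_{1}\gamma_{2}}$'' work after passing to a common refinement, and your stated mechanism does not supply it.

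Second, ``injectivity is immediate'' is not. Fixing the maximal partition of Proposition \ref{prop:simpledescription} makes $\gamma\mapsto\Delta_{\gamma}$ injective into the set of diagrams, but you need injectivity into $D_{b}(\mathcal{P}_{\sm_{X}},[X])$, i.e.\ modulo dipoles. The missing step is to show that the diagram built from the maximal defining triple is \emph{reduced}: a dipole would have to straddle the two halves at balls $B_{1},B_{2}$ with $[B_{1}]=[B_{2}]$, and then smallness together with the restriction axiom forces $\gamma|_{B_{1}}$ to coincide with the unique element of $\sm_{X}(B_{1},B_{2})$, contradicting maximality of the partition; uniqueness of reduced representatives then gives a trivial kernel. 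Some argument of this kind must be supplied. Your converse direction matches the paper's construction and is fine, including the observation that condition (2) of tree-likeness is what makes the induced similarity structure both well defined and small.
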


\begin{proof}
If $\gamma \in \Gamma(\sm_{X})$, then, by Proposition \ref{prop:simpledescription}, there are standard 
partitions $\mathcal{P}_{1}$, $\mathcal{P}_{2}$ of $X$ into balls,
and a bijection $\phi: \mathcal{P}_{1} \rightarrow \mathcal{P}_{2}$ such that, for any $B \in \mathcal{P}_{1}$,
$\gamma(B) = \phi(B)$ and $\gamma|_{B} \in \sm_{X}(B, \gamma(B))$.
Since $|\sm_{X}(B,\gamma(B))| \leq 1$, the triple $(\mathcal{P}_{1}, \mathcal{P}_{2}, \phi)$ determines $\gamma$ without
ambiguity. We call $(\mathcal{P}_{1}, \mathcal{P}_{2}, \phi)$ a \emph{defining triple} for $\gamma$. Note that a given $\gamma$
will usually have many defining triples. Let $\mathcal{D}$ be the set of all defining triples, for $\gamma$ running over all of
$\Gamma(\sm_{X})$.

We will now define a map $\psi: \mathcal{D} \rightarrow D_{b}(\mathcal{P}_{\sm_{X}}, [X])$. To a partition $\mathcal{P}$
of $X$ into balls, we first assign a braided diagram $\Delta_{\mathcal{P}}$ over $\mathcal{P}_{\sm_{X}}$. There is a transistor 
$T_{B} \in \mathcal{T}(\Delta_{\mathcal{P}})$ for each ball $B$ which properly contains some ball of $\mathcal{P}$. There is a wire
$W_{B} \in \mathcal{W}(\Delta_{\mathcal{P}})$ for each ball $B$ which contains a ball of $\mathcal{P}$. The wires are attached as follows:
\begin{enumerate}
\item If $B=X$, then we attach the top of $W_{B}$ to the top of the frame.
 If $B \neq X$, then the top of the wire $W_{B}$ is attached to the bottom of the transistor $T_{\widehat{B}}$, where $\widehat{B}$ is the (unique) ball
that contains $B$ as a maximal proper subball.

Moreover, we attach the wires in an ``order-respecting" fashion. Thus, if $\widehat{B}$ is a ball properly containing balls of $\mathcal{P}$, we let 
$B_{1}, B_{2}, \ldots, B_{n}$ be the collection of maximal proper subballs of $\widehat{B}$, listed in order. We attach the wires $W_{B_{1}}, W_{B_{2}}, \ldots,
W_{B_{n}}$ so that $t(W_{B_{i}})$ is to the left of $t(W_{B_{j}})$ on the bottom of $T_{\widehat{B}}$ if $i < j$.

\item The bottom of the wire $W_{B}$ is attached to the top of $T_{B}$ if $B$ properly contains a ball of $\mathcal{P}$. If not (i.e., if $B \in \mathcal{P}$),
then we attach the bottom of $W_{B}$ to the bottom of the frame. We can arrange, moreover, that the wires are attached in an order-respecting manner to the
bottom of the frame. (Thus, if $B_{1} < B_{2}$ ($B_{1}, B_{2} \in \mathcal{P}$), we have that $b(W_{B_{1}})$ is to the left of $b(W_{B_{2}})$.)
\end{enumerate}
The labelling function $\ell: \mathcal{W}(\Delta_{\mathcal{P}}) \rightarrow \Sigma$ sends $W_{B}$ to $[B]$. It is straightforward to check that the
resulting $\Delta_{\mathcal{P}}$ is a braided diagram over $\mathcal{P}_{\sm_{X}}$. The top label of $\Delta_{\mathcal{P}}$ is $[X]$.

Given a bijection $\phi: \mathcal{P}_{1} \rightarrow \mathcal{P}_{2}$, where $\mathcal{P}_{1}$ and $\mathcal{P}_{2}$ are partitions of $X$ into balls
and $[B] = [\phi(B)]$, we can define a braided diagram $\Delta_{\phi}$ over $\mathcal{P}_{\sm_{X}}$ as follows. We let 
$\mathcal{T}(\Delta_{\phi}) = \emptyset$, and $\mathcal{W}(\Delta_{\phi}) = \{ W_{B} \mid B \in \mathcal{P}_{1} \}$.  
We attach the top of each wire to the frame in such a way that $t(W_{B_{1}})$ is to the left of $t(W_{B_{2}})$ if $B_{1} < B_{2}$.
(Here $<$ refers to the ordering from Remark \ref{remark:ballorder}.) We attach the
bottom of each wire to the bottom of the frame in such a way that $b(W_{B_{1}})$ is to the left of $b(W_{B_{2}})$ if $\phi(B_{1}) < \phi(B_{2})$.

Now, for a defining triple $(\mathcal{P}_{1}, \mathcal{P}_{2}, \phi) \in \mathcal{D}$, we set
$\psi( (\mathcal{P}_{1}, \mathcal{P}_{2}, \phi)) = \Delta_{\mathcal{P}_{2}} \circ \Delta_{\phi^{-1}} \circ \Delta_{\mathcal{P}_{1}}^{-1}
\in \mathcal{D}_{b}(\mathcal{P}_{\sm_{X}}, [X])$.

We claim that any two defining triples $(\mathcal{P}_{1}, \mathcal{P}_{2}, \phi)$, $(\mathcal{P}_{1}', \mathcal{P}_{2}', \phi')$ 
for a given $\gamma \in \Gamma(\sm_{X})$
have the same image
in $D_{b}(\mathcal{P}_{\sm_{X}}, [X])$, modulo dipoles. We begin by proving an intermediate statement. Let $(\mathcal{P}_{1}, \mathcal{P}_{2}, \phi)$
be a defining triple. Let $B \in \mathcal{P}_{1}$, and let $\widehat{B}_{1}, \ldots, \widehat{B}_{n}$ be the collection of maximal proper subballs of $B$, listed
in order. We let $B' = \phi(B)$ and let $\widehat{B}'_{1}, \ldots, \widehat{B}'_{n}$ be the collection of maximal proper subballs of $B'$. (Note that
$[B'] = [B]$ by our assumptions, so both have the same number of maximal proper subballs, and in fact $[\widehat{B}_{i}] = [\widehat{B}'_{i}]$ for $i = 1, \ldots, n$,
since $\gamma|_{B} \in \sm_{X}(B, B')$ and the elements of $\sm_{X}(B, B')$ preserve order.) We set 
$\widehat{\mathcal{P}}_{1} = (\mathcal{P}_{1} - \{ B \}) \cup \{ \widehat{B}_{1}, \ldots, \widehat{B}_{n} \}$,
$\widehat{\mathcal{P}}_{2} = (\mathcal{P}_{2} - \{ B' \}) \cup \{ \widehat{B}'_{1}, \ldots, \widehat{B}'_{n} \}$,
and $\widehat{\phi}|_{\mathcal{P}_{1} - \{ B \}} = \phi|_{\mathcal{P}_{1} - \{ B \}}$, 
$\widehat{\phi} (\widehat{B}_{i}) = \widehat{B}'_{i}$. We say that $(\widehat{\mathcal{P}}_{1}, \widehat{\mathcal{P}}_{2}, \widehat{\phi})$
is obtained from  $(\mathcal{P}_{1}, \mathcal{P}_{2}, \phi)$ by \emph{subdivision} at $(B, B')$.
A straightforward argument shows that $\psi( (\widehat{\mathcal{P}}_{1}, \widehat{\mathcal{P}}_{2}, \widehat{\phi}))$ is in fact obtained from
$\psi( (\mathcal{P}_{1}, \mathcal{P}_{2}, \phi))$ by inserting a dipole. We omit the details, which rely on the fact that each
element of the $\sm_{X}$-structure preserves the local ball order.

Now suppose that $(\mathcal{P}_{1}, \mathcal{P}_{2}, \phi)$ and $(\mathcal{P}'_{1}, \mathcal{P}'_{2}, \phi')$ are defining triples
for the same element $\gamma \in \Gamma(\sm_{X})$. We can find a common refinement $\mathcal{P}''_{1}$ of $\mathcal{P}_{1}$ and $\mathcal{P}'_{1}$.
Using the fact that all partitions of $X$ into balls are standard (Lemma
\ref{lemma:standard}),
 we can pass from $(\mathcal{P}_{1}, \mathcal{P}_{2}, \phi)$ to $(\mathcal{P}''_{1}, \widehat{P}_{2}, \widehat{\phi})$ by repeated subdivision
(for some partition $\widehat{\mathcal{P}}_{2}$ of $X$ into balls and some bijection $\widehat{\phi}: \mathcal{P}''_{1} \rightarrow \widehat{\mathcal{P}}_{2}$).
Since subdivision does not change the values of $\psi$ modulo dipoles, $\psi((\mathcal{P}_{1}, \mathcal{P}_{2}, \phi)) = 
\psi((\mathcal{P}''_{1}, \widehat{\mathcal{P}}_{2}, \widehat{\phi}))$ modulo dipoles. Similarly, we can subdivide
$(\mathcal{P}'_{1}, \mathcal{P}'_{2}, \phi')$ repeatedly in order to obtain $(\mathcal{P}''_{1}, \widehat{\mathcal{P}}'_{2}, \widehat{\phi}')$, where
$\psi((\mathcal{P}'_{1}, \mathcal{P}'_{2}, \phi')) = \psi((\mathcal{P}''_{1}, \widehat{\mathcal{P}}'_{2}, \widehat{\phi}'))$ modulo dipoles.
Both $(\mathcal{P}''_{1}, \widehat{\mathcal{P}}'_{2}, \widehat{\phi}')$
and $(\mathcal{P}''_{1}, \widehat{\mathcal{P}}_{2}, \widehat{\phi})$ are defining triples for $\gamma$, so we are forced to have
$\widehat{\phi} = \widehat{\phi}'$ and $\widehat{\mathcal{P}}_{2} = \widehat{\mathcal{P}}'_{2}$.
It follows that $\psi((\mathcal{P}_{1}, \mathcal{P}_{2}, \phi)) = \psi((\mathcal{P}'_{1}, \mathcal{P}'_{2}, \phi'))$, so $\psi$ induces a function
from $\Gamma(\sm_{X})$ to $D_{b}(\mathcal{P}_{\sm_{X}}, [X])$. We will call this function $\widehat{\psi}$.

Now we will show that $\widehat{\psi}: \Gamma(\sm_{X}) \rightarrow D_{b}(\mathcal{P}_{\sm_{X}}, [X])$ is a homomorphism. Let $\gamma, \gamma' \in
\Gamma(\sm_{X})$. After subdividing as necessary, we 
can choose defining triples $(\mathcal{P}_{1}, \mathcal{P}_{2}, \phi)$ and $(\mathcal{P}'_{1}, \mathcal{P}'_{2}, \phi')$ for 
$\gamma$ and $\gamma'$ (respectively) in such a way that $\mathcal{P}_{2} = \mathcal{P}'_{1}$. It follows easily that 
$(\mathcal{P}_{1}, \mathcal{P}'_{2}, \phi' \phi)$ is a defining triple for $\gamma' \gamma$. Therefore, 
$\widehat{\psi}(\gamma' \gamma) = \Delta_{\mathcal{P}'_{2}} \circ \Delta_{(\phi' \phi)^{-1}} \circ \Delta^{-1}_{\mathcal{P}_{1}}$.
Now 
\begin{align*}
\widehat{\psi}(\gamma') \circ \widehat{\psi}(\gamma) &= \Delta_{\mathcal{P}'_{2}} \circ \Delta_{(\phi')^{-1}} \circ \Delta^{-1}_{\mathcal{P}'_{1}} \circ
\Delta_{\mathcal{P}_{2}} \circ \Delta_{\phi^{-1}} \circ \Delta^{-1}_{\mathcal{P}_{1}} \\
&= \Delta_{\mathcal{P}'_{2}} \circ \Delta_{(\phi')^{-1}} \circ \Delta_{\phi^{-1}} \circ \Delta^{-1}_{\mathcal{P}_{1}} \\
&= \Delta_{\mathcal{P}'_{2}} \circ \Delta_{(\phi'\phi)^{-1}} \circ \Delta^{-1}_{\mathcal{P}_{1}}
\end{align*}
Therefore, $\widehat{\psi}$ is a homomorphism. 
   
We now show that $\widehat{\psi}: \Gamma(\sm_{X}) \rightarrow D_{b}(\mathcal{P}_{\sm_{X}}, [X])$ is injective. Suppose that $\widehat{\psi}(\gamma) = 1$.
Using the final statement of Proposition \ref{prop:simpledescription}, we choose a defining triple $(\mathcal{P}_{1}, \mathcal{P}_{2}, \phi)$ for $\gamma$ with the property that, if $B \subseteq X$ is a ball, $\gamma(B)$ is a ball,
and $\gamma|_{B} \in \sm_{X}(B, \gamma(B))$, then $B$ is contained in some ball of $\mathcal{P}_{1}$. We claim that 
$\psi((\mathcal{P}_{1}, \mathcal{P}_{2}, \phi))$ is a reduced diagram. If there were a dipole $(T_{1}, T_{2})$, then we would have
$T_{1} \in \mathcal{T}(\Delta_{\mathcal{P}_{1}}^{-1})$ and $T_{2} \in \mathcal{T}(\Delta_{\mathcal{P}_{2}})$, since it is impossible for $\Delta_{\mathcal{P}}$
to contain any dipoles, for any partition $\mathcal{P}$ of $X$ into balls. Thus $T_{1} = T_{B_{1}}$ and $T_{2} = T_{B_{2}}$, where $[B_{1}] = [B_{2}]$ and
the wires from the bottom of $T_{B_{2}}$ attach to the top of $T_{B_{1}}$, in order.
This means that, if $\widehat{B}_{1}, \ldots, \widehat{B}_{n}$ are the maximal proper subballs of $B_{1}$, 
and $\widehat{B}'_{1}, \ldots, \widehat{B}'_{n}$ are the maximal proper subballs of $B_{2}$, then $\gamma(\widehat{B}_{i}) = \widehat{B}'_{i}$, where the latter
is a ball, and $\gamma|_{\widehat{B}_{i}} \in \sm_{X}(\widehat{B}_{i}, \widehat{B}'_{i})$.

Now, since $[B_{1}] = [B_{2}]$, there is $h \in \sm_{X}(B_{1}, B_{2})$. Since $\sm_{X}$ is closed under restrictions and $h$ preserves order, 
we have $h_{i} \in \sm_{X}( \widehat{B}_{i}, \widehat{B}'_{i})$ for $i = 1, \ldots, n$, where $h_{i} = h|_{\widehat{B}_{i}}$. It follows that
$\gamma|_{\widehat{B}_{i}} = h_{i}$, so, in particular, $\gamma|_{B_{1}} = h$.
Since $B_{1}$ properly contains some ball in $\mathcal{P}_{1}$, this is a contradiction. Thus, $\psi((\mathcal{P}_{1}, \mathcal{P}_{2}, \phi))$
is reduced.

We claim that $\psi((\mathcal{P}_{1}, \mathcal{P}_{2}, \phi))$ contains no transistors (due to the condition $\widehat{\psi}(\gamma) = 1$).
We have shown that $\psi((\mathcal{P}_{1}, \mathcal{P}_{2}, \phi))$ is a reduced diagram in the same class as the identity
$1 \in D_{b}(\mathcal{P}_{\sm_{X}}, [X])$. The identity can be represented as the (unique) $([X], [X])$-diagram $\Delta_{1}$ with only a single wire,
$W_{X}$, and no transistors. We must have $\psi((\mathcal{P}_{1}, \mathcal{P}_{2}, \phi)) \equiv \Delta_{1}$. Thus,
there is no ball that properly contains a ball of $\mathcal{P}_{1}$. It can only be that $\mathcal{P}_{1} = \{ X \}$, so we must have
$\gamma \in \sm_{X}(X,X)$. This forces $\gamma = 1$, so $\widehat{\psi}$ is injective.

Finally we must show that $\widehat{\psi}: \Gamma(\sm_{X}) \rightarrow D_{b}(\mathcal{P}_{\sm_{X}}, [X])$ is surjective. Let
$\Delta$ be a reduced $([X],[X])$-diagram over $\mathcal{P}_{\sm_{X}}$. A transistor $T \in \mathcal{T}(\Delta)$ is called \emph{positive}
if its top label is the left side of a relation in $\mathcal{P}_{\sm_{X}}$, otherwise (i.e., if the top label is the right side of a relation
in $\mathcal{P}_{\sm_{X}}$) the transistor $T$ is \emph{negative}. It is easy to see that the sets of positive and negative transistors partition
$\mathcal{T}(\Delta)$. We claim that, if $\Delta$ is reduced, then we cannot have $T_{1} \dot{\preceq} T_{2}$ when $T_{1}$ is positive and $T_{2}$
is negative. If we had such $T_{1} \dot{\preceq} T_{2}$, then we could find $T'_{1} \preceq T'_{2}$, where $T'_{1}$ is positive and $T'_{2}$ is negative.
Since $T'_{1}$ is positive, there is only one wire $W$ attached to the top of $T'_{1}$. This wire must be attached to the bottom of $T'_{2}$, since
$T'_{1} \preceq T'_{2}$, and it must be the only wire attached to the bottom of $T'_{2}$, since $T'_{2}$ is negative and $\mathcal{P}_{\sm_{X}}$
is a tree-like semigroup presentation by Remark \ref{rem: psimx}. Suppose that $\ell(w) = [B]$. By the definition of $\mathcal{P}_{\sm_{X}}$, $[B]$ is the left
side of exactly one relation, namely $([B], [B_{1}][B_{2}]\ldots[B_{n}])$, where the $B_{i}$ are maximal proper subballs of $B$, listed in order. It follows
that the bottom label of $T'_{1}$ is $[B_{1}][B_{2}]\ldots[B_{n}]$ and the top label of $T'_{2}$ is $[B_{1}][B_{2}]\ldots[B_{n}]$. Therefore
$(T'_{1}, T'_{2})$ is a dipole. This proves the claim.

A diagram over $\mathcal{P}_{\sm_{X}}$ is \emph{positive} if all of its transistors are positive, and \emph{negative} if all of its transistors are negative.
We note that $\Delta$ is positive if and only if $\Delta^{-1}$ is negative, by the description of inverses in the proof of Theorem \ref{thm: braideddiagramgroups}.
The above reasoning shows that any reduced $([X], [X])$-diagram over $\mathcal{P}_{\sm_{X}}$ can be written
$\Delta = \Delta^{+}_{1} \circ \left( \Delta_{2}^{+} \right)^{-1}$, where $\Delta^{+}_{i}$ is a positive diagram for $i = 1,2$.

We claim that any positive diagram $\Delta$ over $\mathcal{P}_{\sm_{X}}$ with top label $[X]$ is $\Delta_{\mathcal{P}}$ (up to a reordering of the bottom 
contacts), where $\mathcal{P}$ is some partition of $X$. There is a unique wire $W \in \mathcal{W}(\Delta)$ making a top contact with the frame. We call
this wire $W_{X}$. Note that its label is $[X]$ by our assumptions. The bottom contact of $W_{X}$ lies either on the bottom of the frame, or on top of some
transistor. In the first case, we have $\Delta = \Delta_{\mathcal{P}}$ for $\mathcal{P} = \{ X \}$ and we are done. In the second, the bottom contact of 
$W_{X}$ lies on top of some transistor $T$, which we call $T_{X}$. Since the top label of $T_{X}$ is $[X]$, the bottom label must be $[B_{1}]\ldots [B_{k}]$,
where $B_{1}, \ldots, B_{k}$ are the maximal proper subballs of $X$. Thus there are wires $W_{1}, \ldots, W_{k}$ attached to the bottom of $T_{X}$, and we 
have $\ell(W_{i}) = [B_{i}]$, for $i = 1, \ldots, k$. We relabel each of the wires $W_{B_{1}}, \ldots, W_{B_{k}}$, respectively. Note that
$\{ B_{1}, \ldots, B_{k} \}$ is a partition of $X$ into balls. We can continue in this way, inductively labelling each wire with a ball $B \subseteq X$.
If we let $\overline{B}_{1}, \ldots, \overline{B}_{m}$ be the resulting labels of the wires which make bottom contacts with the frame, then 
$\{ \overline{B}_{1}, \ldots, \overline{B}_{m} \} = \mathcal{P}$ is a partition of $X$ into balls, and $\Delta = \Delta_{\mathcal{P}}$ by construction,
up to a reordering of the bottom contacts.

We can now prove surjectivity of $\widehat{\psi}$. Let $\Delta \in D_{b}(\mathcal{P}_{\sm_{X}}, [X])$ be reduced. We can write
$\Delta = \Delta_{2}^{+} \circ \left( \Delta_{1}^{+} \right)^{-1}$, where $\Delta_{i}^{+}$ is positive, for $i=1,2$. It follows that
$\Delta_{i}^{+} = \Delta_{\mathcal{P}_{i}} \circ \sigma_{i}$, for $i=1,2$, where $\mathcal{P}_{i}$ is a partition of $X$ into balls and $\sigma_{i}$ is 
diagram containing no transistors. Thus, $\Delta = \Delta_{\mathcal{P}_{2}} \circ \sigma_{2} \circ \sigma_{1}^{-1} \circ \Delta_{\mathcal{P}_{1}}^{-1}
= \psi(( \mathcal{P}_{1}, \mathcal{P}_{2}, \phi))$, where $\phi: \mathcal{P}_{1} \rightarrow \mathcal{P}_{2}$ is a bijection determined by
$\sigma_{2} \circ \sigma_{1}^{-1}$. Therefore, $\widehat{\psi}$ is surjective.

Now we must show that if $\mathcal{P} = \langle \Sigma \mid \mathcal{R} \rangle$ is a tree-like semigroup presentation, $x \in \Sigma$, then there
is a compact ultrametric space $X_{\mathcal{P}}$, a small similarity structure $\sm_{X_{\mathcal{P}}}$, and a compatible ball order, such that
$D_{b}(\mathcal{P}, x) \cong \Gamma(\sm_{X_{\mathcal{P}}})$. 
Construct a labelled ordered simplicial tree $T_{(\mathcal{P}, x)}$ as follows. Begin with a vertex $\ast$, the root, labelled by $x \in \Sigma$. 
By the definition of tree-like semigroup presentation (Definition \ref{def: treelike}), there is at most  one relation in $\mathcal{R}$ having
the word $x$ as its left side. Let us suppose first that $(x,x_{1}x_{2}\ldots x_{k}) \in \mathcal{R}$, where $k \geq 2$. 
We introduce $k$ children of the root, labelled $x_{1}, \ldots, x_{k}$ (respectively), each connected to the root by an edge. 
The children are ordered from left to right in such a way that we read the
word $x_{1}x_{2}\ldots x_{k}$ as we read the labels of the children from left to right. If, on the other hand, $x$ is not the left side of any relation
in $\mathcal{R}$, then the tree terminates -- there is only the root. We continue similarly: if $x_{i}$ is the left side of some relation 
$(x_{i}, y_{1}y_{2}\ldots y_{m}) \in \mathcal{R}$ ($m \geq 2$),
then this relation is unique and we introduce a labelled ordered collection of children, as above. If $x_{i}$ is not the left side of any relation
in $\mathcal{R}$, then $x_{i}$ has no children. This builds a labelled ordered tree $T_{(\mathcal{P}, x)}$. We note that if a vertex $v 
\in T_{(\mathcal{P}, x)}$ is labelled by $y \in \Sigma$, then the subcomplex $T_{v} \leq T_{(\mathcal{P},x)}$ spanned by $v$ and all of its descendants is isomorphic
to $T_{(\mathcal{P}, y)}$, by a simplicial isomorphism which preserves the labelling and the order. 

We let $\mathrm{Ends}(T_{(\mathcal{P}, x)})$ denote the set of all edge-paths $p$ in $T_{(\mathcal{P}, x)}$ such that:
i) $p$ is without backtracking; ii) $p$ begins at the root; iii) $p$ is either infinite, or $p$ terminates at a vertex without children.
We define a metric on $\mathrm{Ends}(T_{(\mathcal{P}, x)})$ as follows. If $p, p' \in \mathrm{Ends}(T_{(\mathcal{P},  x)})$ and $p, p'$ have exactly
$m$ edges in common, then we set $d(p,p') = e^{-m}$. This metric makes $\mathrm{Ends}(T_{(\mathcal{P},x)})$ a compact ultrametric space, and a ball order is given
by the ordering of the tree. We can describe the
balls in $\mathrm{Ends}(T_{(\mathcal{P},x)})$ explicitly. Let $v$ be a vertex of $T_{(\mathcal{P},x)}$. We set $B_{v} = 
\{ p \in \mathrm{Ends}(T_{(\mathcal{P},x)}) \mid v \text{ lies  on } p \}$. Every such set is a ball, and every ball in $\mathrm{Ends}(T_{(\mathcal{P},x)})$
has this form. We can now describe a finite similarity structure $\sm_{X_{\mathcal{P}}}$ on $\mathrm{Ends}(T_{(\mathcal{P}, x)})$. Let $B_{v}$ and $B_{v'}$ be the balls
corresponding to the vertices $v, v' \in T_{(\mathcal{P},x)}$. If $v$ and $v'$ have different labels, then we set $\sm_{X_{\mathcal{P}}}(B_{v}, B_{v'}) = \emptyset$.
If $v$ and $v'$ have the same label, say $y \in \Sigma$, then there is label- and order-preserving simplicial isomorphism $\psi: T_{v} \rightarrow T_{v'}$.
Suppose that $p_{v}$ is the unique edge-path without backtracking connecting the root to $v$. Any point in $B_{v}$ can be expressed in the form
$p_{v}q$, where $q$ is an edge-path without backtracking in $T_{v}$. We let  $\widehat{\psi}: B_{v} \rightarrow B_{v'}$ be defined by the
rule $\widehat{\psi}(p_{v}q) = p_{v'}\psi(q)$. The map $\widehat{\psi}$ is easily seen to be a surjective similarity. 
We set $\sm_{X_{\mathcal{P}}}(B_{v}, B_{v'}) = \{ \widehat{\psi} \}$. The resulting assignments give a small similarity structure $\sm_{X_{\mathcal{P}}}$ on 
the compact
ultrametric space $\mathrm{Ends}(T_{(\mathcal{P},x)})$ that is compatible with the ball order.

Now we can apply the first part of the theorem: setting $X_{(\mathcal{P},x)} = \mathrm{Ends}(T_{(\mathcal{P},x)})$, we
have $\Gamma(\sm_{X_{(\mathcal{P},x)}}) \cong D_{b} (\mathcal{P}_{\sm_{X_{(\mathcal{P},x)}}}, [X_{(\mathcal{P},x)}]) \cong D_{b}(\mathcal{P}, x)$, where the first isomorphism follows from the forward direction of the theorem, and the second isomorphism follows from the canonical identification of the semigroup presentation $\mathcal{P}_{\sm_{X_{(\mathcal{P},x)}}}$
with $\mathcal{P}$.
\end{proof}



\bibliographystyle{plain}
\bibliography{biblio}

\def\cprime{$'$}
\begin{thebibliography}{1}

\bibitem{BMN}
Collin Bleak, Francesco Matucci, and Max Neunh\"{o}ffer.
\newblock Embeddings into {T}hompson's group $\mathrm{V}$ and $co\mathcal{CF}$
  groups.
\newblock {\em arXiv:1312.1855}.

\bibitem{Brown1987}
Kenneth~S. Brown.
\newblock Finiteness properties of groups.
\newblock {\em J. Pure Appl. Algebra}, 44(1-3):45--75, 1987.

\bibitem{CFP}
J.~W. Cannon, W.~J. Floyd, and W.~R. Parry.
\newblock Introductory notes on {R}ichard {T}hompson's groups.
\newblock {\em Enseign. Math. (2)}, 42(3-4):215--256, 1996.

\bibitem{Far3}
Daniel~S. Farley.
\newblock Actions of picture groups on {CAT}(0) cubical complexes.
\newblock {\em Geom. Dedicata}, 110:221--242, 2005.

\bibitem{FH}
Daniel~S. Farley and Bruce Hughes.
\newblock Finiteness properties of some groups of local similarities.
\newblock {\em arXiv:1206.2692}.

\bibitem{GubaSapir}
Victor Guba and Mark Sapir.
\newblock Diagram groups.
\newblock {\em Mem. Amer. Math. Soc.}, 130(620):viii+117, 1997.

\bibitem{HugLSHP}
Bruce Hughes.
\newblock Local similarities and the {H}aagerup property, with an appendix by
  {D}aniel {S}. {F}arley.
\newblock {\em Groups Geom. Dyn.}, 3:299--315, 2009.

\end{thebibliography}

\end{document}